\documentclass{amsart}

\usepackage{amsmath}
\usepackage{graphicx}
\usepackage{amsfonts}
\usepackage{amssymb}
\usepackage{amsthm}

\theoremstyle{plain}
\newtheorem{theorem}{Theorem}
\newtheorem{corollary}{Corollary}%[section]
\newtheorem{lemma}{Lemma}%[section]
%[section]
\numberwithin{equation}{section}
\theoremstyle{definition}
\newtheorem{definition}{Definition}
\theoremstyle{remark}
\newtheorem{remark}{Remark}%[section]

\setcounter{MaxMatrixCols}{30}

\def\theenumi{\arabic{enumi}}

\def\theenumii{\alph{enumii}}
\def\p@enumii{\theenumi.}

\def\theenumiii{\arabic{enumiii}}
\def\p@enumiii{(\theenumi)(\theenumii)}

\def\p@enumiv{\p@enumiii.\theenumiii}
\newcommand{\N}{\mathbb{N}}

\newcommand{\mex}[1]{\text{mex}{ #1}}

\pagestyle{plain}
\setcounter{secnumdepth}{1}

%%%%%%%%%%%%%%%%%%%%%%%%%%%%%%%%%%%%%%%%%%%%%%%%%%%%%%%%%%%%%%%%%%%%%%%%%
\begin{document}

\title[A Subtraction Game on Graphs]{Combinatorial Analysis of a Subtraction Game on Graphs}
%%%%%%%%%%%%%%%%%%%%%%%
\author{Richard Adams}
\address{Department of Mathematics. California State University, Fresno. Fresno, CA 93740.}
\email{integragsr2001@mail.fresnostate.edu}
%%%%%%%%%%%%%%%%%%%%%%%
\author{Janae Dixon}
\address{Department of Mathematics. California State University, Fresno. Fresno, CA 93740.}
\email{janaekeys@mail.fresnostate.edu}
%%%%%%%%%%%%%%%%%%%%%%%
\author{Jennifer Elder}
\address{Department of Mathematics. California State University, Fresno. Fresno, CA 93740.}
\email{arwenu@mail.fresnostate.edu}
%%%%%%%%%%%%%%%%%%%%%%%
\author{Jamie Peabody}
\address{Department of Mathematics. Kansas State University. Manhattan, KS 66506.}
\email{jpeabody@math.ksu.edu}
%%%%%%%%%%%%%%%%%%%%%%%
\author{Oscar Vega}
\address{Department of Mathematics. California State University, Fresno. Fresno, CA 93740.}
\email{ovega@csufresno.edu}
%%%%%%%%%%%%%%%%%%%%%%%
\author{Karen Willis}
\address{Clovis Community College. Fresno, CA 93730.}
\email{karen.willis@scccd.edu}

%\date{}

\subjclass[2000]{Primary 05C57, 91A43, 91A46. Secondary 68R10}
\keywords{Games on Graphs, Sprague-Grundy}
\thanks{Partially supported by the National Science Foundation grant DMS-1247679}

\begin{abstract}
We define a two-player combinatorial game in which players take alternate turns; each turn consists on deleting a vertex of a graph, together with all the edges containing such vertex. If any vertex became isolated by a player's move then it would also be deleted. A player wins the game when the other player has no moves available. 

We study this game under various viewpoints: by finding specific strategies for certain families of graphs, through using properties of a graph's automorphism group, by writing a program to look at Sprague-Grundy numbers, and by studying the game when played on random graphs.

When analyzing Grim played on paths, using the Sprague-Grundy function, we find a  connection to a standing open question about Octal games.
\end{abstract}

%http://www.hindawi.com/journals/ijcom/

\maketitle
%%%%%%%%%%%%%%%%%%%%%%%%%%%%%%%%%%%%%%%%%%%%%%%%%%%%%%%%%%%%

In this article we define a two-person game played on the vertices of a graph, and then study it to find strategies for either player to win. The analysis of the game ends up depending heavily on the family of graphs considered at the time. This is why, in this article, we will consider a wide variety of tools from game theory, combinatorics and group theory, plus some programming, to attack this problem. In the following  section we will cover some basic notation and definitions that will be useful throughout the paper, as well as the actual game play.

%%%%%%%%%%%%%%%%%%%%%%%%%%%%%%%%%%%%%%%%%%%%%%%%%%%%%%%%%%%%
\section{Introduction}

A \textit{graph} $G=(V,E)$ is a set $V$ of vertices and a set $E$ of edges connecting pairs of vertices. In this work we consider only graphs that are finite, simple and undirected.  As customary, the \textit{degree} of a vertex $v$, denoted $deg(v)$, is the number of edges that are incident with $v$. We say that a vertex $v$ is \textit{isolated} if $deg(v)=0$.  A graph is said to be connected if, given any two vertices $u,v \in V(G)$, there is a path in $G$ from $u$ to $v$.  Two graphs are said to be \textit{disjoint} when their vertex sets are disjoint; we will use the notation $G \cup  H$ for the graph formed by two disjoint graphs, $G$ and $H$.  The \emph{join of two graphs} $G$ and $H$,  denoted $G+H$,  is the graph with vertex-set $V(G+H) =V(G)\cup V(H)$ and edge-set $E(G+H) =E(G) \cup E(H) \cup \{vw; \ v\in V(G), \ w\in V(H)\}$. Finally, we will denote \emph{paths}, \emph{cycles} and \emph{wheels} by $P_n$, $C_n$, and $W_n$ respectively, where $n$ is the number of vertices in the graph, and $\overline{G}$ will denote the graph with $V(\overline{G})=V(G)$ and edges connecting only vertices that were not connected in $G$. More information about graphs may be found in \cite{BM}. 

In order to define our game  we re-phrase well-known notions in combinatorial game theory for a game played on graphs. For further reading about game theory, we direct the reader to \cite{WWMG}.

\begin{definition}
A  two-player game is said to be \textit{impartial} if the outcome of the game depends only on what player goes first. A  two-player game is said to be \textit{combinatorial} if both players have perfect information, there is no chance involved, the game ends after a finite number of moves, the game is impartial, and there is no draw. A  two-player game is said to be \textit{normal} if the last player to make a legal move is the winner.
\end{definition}

\begin{definition} \label{defNPposition}
Let $G$ and $H$ be graphs on which a combinatorial game can be played.
\begin{enumerate}
\item If $H$ is obtained from $G$ after a game move, then we will call $H$ a \emph{follower} of $G$ in the game.

\item If, given a graph $H$, there is a strategy to win for the next player making a move then we will say that $H$ is an \textit{$\mathcal{N}$ position} (the $\mathcal{N}$ is for next). If, given a graph $H$, the next player making a move does not have a strategy to win then $H$ is a  \textit{$\mathcal{P}$ position} (the  $\mathcal{P}$ is for previous).  
\end{enumerate}
\end{definition}

\begin{remark}
Not having a strategy to win means the other player has a strategy to win. This follows from the Sprague-Grundy Theorem, see \cite{G39} and \cite{S36}. So, if  $G$ is the starting graph and Player 1 will move first, then $G$ being an $\mathcal{N}$ position means Player 1 has  a strategy to win the game, but if $G$ is a $\mathcal{P}$ position then Player 2 has  a strategy to win the game. 
\end{remark}

Next we define our game, which we call \emph{Grim}.

\begin{definition}
Given a graph $H$, we define a \emph{legal move} of Grim on $H$ by a player  selecting and consequently deleting a vertex. When this vertex is deleted all edges adjacent to this vertex are also deleted, together with any other vertices (if any) that have become isolated because of the move. 

The game starts with Player 1 moving first on a pre-arranged, starting position, graph $G$ (naturally, if $G$ had any isolated vertices these would be deleted before the first player can move). After that, the two players alternate turns, making legal moves on the follower resulted from the previous player's move. They play until all vertices have been deleted. We refer to the winner of the game, or winner of the graph, as the player who makes the last legal move.
\end{definition}

\begin{remark}
Grim is a normal combinatorial two-player game. Given the nature of the game, we will also say it is a  vertex deletion game. 
\end{remark}

Note that changing the starting  graph may change the way Grim is played completely. So, in this article we are not just studying a specific game, but a family of them. This property is commonplace in the study of games played on graphs, of which there is a wide variety, see e.g. \emph{Node Kayles} in \cite{BK} and \cite{BS}, \emph{Connect-it} \cite{HR84}, \emph{Take Turn} \cite{S07}, and \cite{FS91}, \cite{K14} and \cite{NO05}.

Our interest in the subject of games played on graphs is due to the papers by Fukuyama (see \cite{F04} and \cite{F03}), where he studies Nim on graphs.

Our study of Grim is split as follows: In Section \ref{sec2} we look at weighted graphs and prove that an extension of Grim to these graphs is unnecessary, as the strategies that could be used to play Grim in weighted graphs are the same as those used to play Grim on certain unweighted graphs. In Section \ref{sec3} we find winning strategies for when Grim is played on complete and complete multipartite graphs. In Section \ref{sec4} we look at how certain symmetries in a graph could be exploited to guarantee a victory when Grim is played on such a graph. In Section \ref{sectionSG} we study paths, cycles, and other related graphs by using Sprague-Grundy functions; in this way we discover an interesting connection between Grim played on paths and an open problem about Octal games. Finally, in Section \ref{sec6} we consider random graphs in order to learn whether Player 1 has an overall better chance of winning at Grim.

%%%%%%%%%%%%%%%%%%%%%%%%%%%%%%%%%%%%%%%%%%%%%%%%%%%%%%%
\section{Weighted Graphs}\label{sec2}

It is not unusual to wonder whether a game played on graphs could be extended to be played on graphs with weighted vertices, maybe by thinking about the vertices of the graph as heaps of chips/tokens that can be removed one at a time by players. Of course, this way of playing Grim is similar to the way Nim is played, and thus it has been considered in the past (e.g. \cite{DR}). In this section we prove that the natural variation of Grim on weighted graphs is nothing but regular Grim on a different family of graphs.

When playing Grim on a weighted graph, we only allow a vertex with weight $t$ to be deleted after it has been selected $t$ times, or if it has been completely isolated. Thus, we can create many different games from the same graph by giving each vertex a new random weight.

As we studied these graphs, we discovered that we could replace a vertex with weight $t$ with $t$ `regular' vertices that do not share any edges. In order to prove such a claim the concept of a blowup of  a vertex becomes necessary.

\begin{definition}
Let $G$ be a graph with weighted vertices. Let $v\in V(G)$ having weight one and $t\in \mathbb{N}$. A \emph{$t$-blowup} of $v$ is an independent set $I_{v} =\{v_{1}, v_{2}, \ldots , v_{t}  \}$ of vertices of weight $1$ that `takes the place' of $v$. More precisely, wherever there was an edge joining $v$ to $w\in V(G)$ there is an edge joining $v_{j}$ with $w$, for all $j=1,\ldots, t$.

The graph obtained by the $t$-blowup of $v$ will be denoted $G(tv)$. Similarly, for $v,w\in V(G)$ and $s,t \in \mathbb{N}$ we denote a `double blowup' $G(tv)(sw)$ as $G(tv , sw)$. For multiple blowups we extend in the natural way the notation set of double blowups.
\end{definition}

Note that $G(1v)=G$, for all $v\in V(G)$.

\begin{lemma}\label{wv}
Let $t\in \N$. Let $G$ be a graph with weighted vertices, and let $v \in V(G)$ have weight $t$. We denote by $G(v)$ the $t$-blowup of the graph obtained by reducing the weight of $v$ to one. Then, the outcome of playing Grim on $G$ or on $G(v)$ is the same.
\end{lemma}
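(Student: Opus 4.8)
The plan is to set up an explicit correspondence between the positions reachable when Grim is played on $G$ and those reachable on $G(v)$, and then to argue by induction that corresponding positions carry the same $\mathcal{N}/\mathcal{P}$ label. Since the two starting graphs will correspond to one another, this immediately gives that $G$ and $G(v)$ have the same outcome.

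First I would encode a position in the game on $G$. Selecting the weighted vertex $v$ leaves the underlying graph unchanged until $v$ has been chosen $t$ times, so a reachable position is captured by the induced subgraph $H$ on the surviving unweighted vertices, together with a counter $r \in \{0, 1, \dots, t\}$ recording how many more selections $v$ needs, where $v$ is still present exactly when $r \geq 1$. A position on $G(v)$ is similarly captured by the same subgraph $H$ together with the number $k$ of surviving copies among $v_1, \dots, v_t$; since the copies are interchangeable, only this count matters. I would then pair $(H, r)$ on $G$ with $(H, k)$ on $G(v)$ whenever $r = k$, noting that the starting graphs are paired at $r = k = t$.

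Next I would verify that this pairing is respected by legal moves, checking each kind of move in turn. Moves that avoid $v$ and its neighborhood act identically on the two graphs. Selecting $v$ while $r \geq 2$ merely decrements $r$ and deletes nothing, which mirrors selecting one copy while $k-1 \geq 1$ copies remain. The decisive cases are those that remove the entire $v$-group, namely selecting $v$ with $r = 1$ and deleting the last surviving neighbor of $v$; in the latter the group is wiped out by the isolation rule whatever the value of $r$, exactly as all surviving copies are isolated at once. The key invariant making all of this work is that a neighbor $x$ of $v$ survives the isolation sweep precisely while some member of the $v$-group survives, together with the equivalence that $v$ is present ($r \geq 1$) if and only if some copy is present ($k \geq 1$).

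The hard part, and where I expect to spend the most care, is the isolation-rule bookkeeping: I must confirm that deleting a given vertex triggers exactly the same secondary (isolation) deletions on $G$ as on $G(v)$. This rests on $v$ and all its copies sharing the common neighborhood $N(v)$, and on the fact that isolation cannot cascade, since a newly isolated vertex carries no edges and so its removal isolates no one else. Once the move correspondence is pinned down, a single induction on the number of surviving vertices in the $G(v)$ picture (equivalently on $|V(H)| + r$), with the empty position serving as the $\mathcal{P}$ base case and the usual rule that a position is $\mathcal{P}$ if and only if every follower is $\mathcal{N}$, shows that paired positions share their $\mathcal{N}/\mathcal{P}$ value and hence that $G$ and $G(v)$ have the same winner.
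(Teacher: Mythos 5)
Your proposal is correct and follows essentially the same route as the paper: the paper's proof also rests on a move-for-move correspondence between selecting the weighted vertex $v$ (or isolating it) and selecting (or isolating) the copies in the independent set $I$, treated in exactly your two decisive cases. The difference is only one of rigor --- you make explicit the position pairing $(H,r)\leftrightarrow(H,k)$, the non-cascading isolation bookkeeping, and the induction, all of which the paper leaves implicit in its informal ``same strategies'' conclusion.
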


\begin{proof}
Let $I =\{v_1, \dots, v_t\}$ be the independent set used to obtain $G(v)$.  There are two possible ways to delete $v$ while playing on $G$, thus we look at the following two cases:  

\textit{Case 1}: Try to isolate $v$ in $G$. Using the same moves needed to remove all of $v$'s neighbors in $G$ we can isolate every vertex in $I$ (in $G(v))$. 

\textit{Case 2}: Removing $v$ in $G$ by repeatedly using it in the game. In order to remove $v$ from $G$, it has to be selected $t$ times. To fully remove $I$ from $G(v)$, each of the $t$ vertices in $I$ must be selected and removed. Thus removing $I$ from $G(v)$ requires an equivalent process as that needed to remove $v$ from $G$. 

Therefore, game play with $v$ or with $I$ will involve the same strategies. Finally, replacing $v$ with $I$ will not affect the other vertices in the graph, and thus it will not affect playing with them.
\end{proof}

An easy induction argument proves the main theorem of this section.

\begin{theorem}\label{npart}
Let $G$ be a weighted graph with $V(G) = \{v_1, \ldots, v_n\}$ and with $t_i$ being the weight of vertex $v_i$, for all $i=1, \ldots, n$. We denote by $G(v_1,v_2,\ldots, v_n)$ be the $(t_1, t_2,\ldots, t_n)$-blowup of the graph obtained by reducing (if possible) the weight of each vertex of $G$ to one. Then,  the outcome of playing Grim on $G$ or on $G(v_1,\ldots, v_n)$ is the same.
\end{theorem}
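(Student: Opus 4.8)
The plan is to prove this by induction on $n$, the number of vertices of $G$, blowing up one vertex at a time and invoking Lemma \ref{wv} at each stage. The base case $n=1$ is immediate: a single weighted vertex $v_1$ of weight $t_1$ is exactly the setting of Lemma \ref{wv}, so $G$ and $G(v_1)$ have the same outcome (and if $t_1 = 1$ the two graphs are equal, since $G(1v)=G$).

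For the inductive step I would introduce the chain of intermediate weighted graphs $G_0 = G$ and $G_k = G(v_1, \ldots, v_k)$ for $k = 1, \ldots, n$, so that $G_n = G(v_1, \ldots, v_n)$ is the fully blown-up graph. The key observation is that each $G_k$ is obtained from $G_{k-1}$ by a single $t_k$-blowup of the vertex $v_k$: in $G_{k-1}$ the vertices $v_1, \ldots, v_{k-1}$ have already been replaced by their independent sets (all now of weight one), while $v_k$ still carries its original weight $t_k$. Hence $v_k$ is a genuine weighted vertex of the weighted graph $G_{k-1}$, and Lemma \ref{wv} applies to it verbatim to give that $G_{k-1}$ and $G_k$ have the same outcome. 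Chaining these equalities along $G_0, G_1, \ldots, G_n$ then yields that $G$ and $G(v_1, \ldots, v_n)$ share the same outcome, which is the claim.

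The one point requiring a little care — and the closest thing to an obstacle — is verifying that $G_k$, as described above, genuinely coincides with the graph $G(v_1, \ldots, v_k)$ of the definition; that is, that the blowup operations at distinct vertices are independent of the order in which they are carried out. Here I would observe that blowing up $v_i$ and then $v_j$ produces the same graph as blowing up $v_j$ and then $v_i$: in either order the result replaces $v_i$ by the independent set $I_{v_i}$ and $v_j$ by $I_{v_j}$, joining every vertex of $I_{v_i}$ to every vertex of $I_{v_j}$ precisely when $v_iv_j \in E(G)$, and joining each new vertex to the remaining neighbors exactly as a single blowup prescribes. Crucially, the conclusion of Lemma \ref{wv} is insensitive to the weights or internal structure of $v_k$'s neighbors, so it is harmless that some of those neighbors are themselves already blown-up copies; each application of the lemma is therefore legitimate, and the induction goes through with no further difficulty.
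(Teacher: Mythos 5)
Your proposal is correct and is essentially the paper's own argument: the paper simply remarks that ``an easy induction argument'' using Lemma \ref{wv} proves the theorem, and your chain of intermediate graphs $G_0, G_1, \ldots, G_n$, blowing up one vertex at a time, is exactly that induction carried out explicitly. Your added care about order-independence of the blowups and the insensitivity of Lemma \ref{wv} to the neighbors' structure fills in details the paper leaves implicit, but does not change the approach.
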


We conclude that Grim on weighted graphs does not need to be studied separately, and so from now on the game discussed is always Grim played on  unweighted graphs. 

Before we move on, we would like to let the the reader know that we will often use the word graph to mean the game played on that graph (as in `Player 1 wins graph $G$'). Most proofs in the following sections somehow describe the specific strategy to be used to win. 

%%%%%%%%%%%%%%%%%%%%%%%%%%%%%%%%%%%%%%%%%%%%%%%%%%%%%%%%%
\section{Complete and complete multipartite graphs}\label{sec3}

In this section we obtain specific strategies to win at Grim for complete graphs and complete multipartite graphs.  We start with a theorem about complete graphs. 

\begin{lemma}\label{thmcomplete}
Let $n \in \N$. Then, $K_n$ is an $\mathcal{N}$ position if and only if $n$ is even.
\end{lemma}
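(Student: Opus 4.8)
The plan is to prove this by induction on $n$, exploiting the fact that $K_n$ is highly symmetric, so that Grim on $K_n$ is essentially a forced game with no meaningful choices. First I would establish the single key structural fact: for $n \geq 3$, deleting any vertex of $K_n$ produces exactly $K_{n-1}$. Indeed, by symmetry all vertices are interchangeable, so there is only one move up to isomorphism; and after removing one vertex, each remaining vertex still has degree $n-2 \geq 1$, so no vertex becomes isolated and the isolation-deletion rule never triggers. Thus for $n \geq 3$ the unique follower of $K_n$ is $K_{n-1}$.

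Next I would handle the base cases, which is where the isolation rule actually matters. For $K_1$, the lone vertex is isolated and removed before play, leaving no legal move, so $K_1$ is a $\mathcal{P}$ position (consistent with $n=1$ odd). For $K_2$, a single move deletes one vertex, which isolates the other; by the rule that other vertex is then also deleted, so one move empties the board and the mover wins. Hence $K_2$ is an $\mathcal{N}$ position (consistent with $n=2$ even). These two cases anchor the induction and are the only places where a move removes more than the chosen vertex.

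For the inductive step with $n \geq 3$, I would use the standard $\mathcal{N}$/$\mathcal{P}$ recursion: since the unique follower of $K_n$ is $K_{n-1}$, the graph $K_n$ is an $\mathcal{N}$ position if and only if $K_{n-1}$ is a $\mathcal{P}$ position. By the inductive hypothesis, $K_{n-1}$ is $\mathcal{N}$ exactly when $n-1$ is even, i.e. when $n$ is odd; equivalently $K_{n-1}$ is $\mathcal{P}$ exactly when $n$ is even. Combining these gives that $K_n$ is $\mathcal{N}$ if and only if $n$ is even, completing the induction.

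I do not expect any serious obstacle here: the only subtlety is remembering that the isolation rule makes $K_2$ (rather than $K_1$) the place where the board is cleared in a single move, and one must verify that no such collapse occurs for $n \geq 3$ so that the follower is cleanly $K_{n-1}$. As an alternative packaging of the same idea, one could simply observe that every play of Grim on $K_n$ (for $n \geq 2$) lasts exactly $n-1$ moves — namely $n-2$ vertex deletions taking $K_n$ down to $K_2$, followed by one board-clearing move — and then invoke normal-play parity: Player~1 makes the last move precisely when $n-1$ is odd, that is, when $n$ is even.
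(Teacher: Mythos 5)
Your proof is correct and takes essentially the same approach as the paper: the paper's entire argument is that all moves on $K_n$ are equivalent, every move yields a complete graph, and so the game always lasts exactly $n-1$ moves, which is precisely the ``alternative packaging'' you give in your final paragraph. Your inductive formulation is just a more detailed write-up of that same parity idea, with the welcome extra care of making explicit the isolation collapse at $K_2$ (and that no such collapse occurs for $n\geq 3$), which the paper leaves implicit.
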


\begin{proof}
The case $n=1$ is immediate, as no game is ever played. For $n>1$, all moves are equivalent in a complete graph, and a move will always yield a complete graph, so the game always lasts $n-1$ moves. 
\end{proof}

Grim played on complete bipartite graphs will be analyzed next.

\begin{theorem}\label{thmcompletebiartire}
Let $m, n \in \N$. Then, the following hold.
\begin{enumerate}
\item $K_{1,n}$  is an $\mathcal{N}$ position.
\item Assume $m, n>1$. Then, $K_{m,n}$  is an $\mathcal{N}$ position if and only if $m+n$ is odd.
\end{enumerate}
\end{theorem}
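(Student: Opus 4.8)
The plan is to analyze the game by tracking the sequence of positions, which for a starting graph $K_{m,n}$ will always be a complete bipartite graph $K_{a,b}$ until the board is emptied. The first claim, that $K_{1,n}$ is an $\mathcal{N}$ position, is immediate: if Player 1 deletes the center vertex of the star, then every one of the $n$ leaves becomes isolated and is consequently deleted, so the entire graph disappears in a single move and Player 1 makes the last legal move. Equivalently, $K_{1,n}$ admits a move to the empty graph, which is a $\mathcal{P}$ position, so $K_{1,n}$ is $\mathcal{N}$.

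For part (2), I would first pin down the possible moves from a position $K_{a,b}$. The key observation is that when $a,b \geq 2$, deleting any vertex --- say from the part of size $a$ --- removes no other vertex: the $a-1 \geq 1$ remaining vertices on that side keep degree $b$, and every vertex on the size-$b$ side keeps degree $a-1 \geq 1$, so nothing becomes isolated. Hence the only followers of $K_{a,b}$ with $a,b \geq 2$ are $K_{a-1,b}$ and $K_{a,b-1}$, and each move decreases $a+b$ by exactly one. A ``collapse'' (a single move that empties the board) becomes available only once a part has shrunk to size $1$, i.e. from a star $K_{1,b}$ or $K_{a,1}$; and by part (1) every such star is an $\mathcal{N}$ position.

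With the move structure in hand, I would prove by induction on $a+b$ that for $a,b \geq 2$ the position $K_{a,b}$ is a $\mathcal{P}$ position if and only if $a+b$ is even. The base case $K_{2,2}$ has only the followers $K_{1,2}$ and $K_{2,1}$, both $\mathcal{N}$, so $K_{2,2}$ is a $\mathcal{P}$ position, matching even parity. For the inductive step, if $a+b$ is even then every follower $K_{a-1,b}$, $K_{a,b-1}$ has odd sum and is therefore $\mathcal{N}$ --- either by the induction hypothesis, when the shrunken part still has size $\geq 2$, or because it is a star, when that part has size $1$ --- so $K_{a,b}$ is a $\mathcal{P}$ position. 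If $a+b$ is odd, then at least one of $a,b$ is $\geq 3$, so $K_{a,b}$ has a follower with both parts of size $\geq 2$ and even sum; by the induction hypothesis that follower is a $\mathcal{P}$ position, so $K_{a,b}$ is an $\mathcal{N}$ position.

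The point that needs the most care is the interface between the ``interior'' positions (both parts of size $\geq 2$, governed cleanly by the parity rule) and the ``boundary'' star positions (one part of size $1$), which break the parity pattern because they are always $\mathcal{N}$. I would need to verify that the induction never invokes the parity rule on a star, and that in the odd case one can always route to an even-sum interior follower (decreasing whichever part is $\geq 3$) rather than being forced into a star. This is precisely why the hypothesis $m,n>1$ is needed and why $K_{1,n}$ is handled separately in part (1).
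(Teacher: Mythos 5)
Your proof is correct and follows essentially the same route as the paper: induction on $m+n$ with base case $K_{2,2}$, using part (1) to handle star followers and a parity argument for the interior positions. Your explicit verification that no vertex can become isolated when both parts have size at least $2$ (so the only followers of $K_{a,b}$ are $K_{a-1,b}$ and $K_{a,b-1}$) is a point the paper leaves implicit, and is a welcome addition.
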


\begin{proof}
The first claim is immediate. For the second claim we start by settling a base case.

For $K_{2,2}$, whatever move Player 1 makes, the follower is always $K_{1,2}$. By part 1, Player 2 wins this graph. 

Now we proceed by induction on $k=m+n$. Part 1 and the base case considered above tell us that we have the result for $k=3,4$. So, assume the result holds for $m+n=k\geq 4$. Consider $K_{a,b}$ where $a+b=k+1$. 

\textbf{Case 1:} If $k+1$ is odd, then $k+1\geq 5$. WLOG assume that $a>2$ and let Player 1 move by creating the follower $K_{a-1,b}$. Since both $a-1, b>1$ we use the induction hypothesis to determine that Player 1 wins $K_{a,b}$. 

\textbf{Case 2:} If $k+1$ is even, then $k+1\geq 6$. If WLOG $a=2$ then Player 1 would not want to move to create the follower $K_{1,b}$, as this would be won by Player 2. However, if they moved to create the follower $K_{2,b-1}$ then they would lose anyway, by induction, as $2,b-1 >1$. Now, if $a, b>2$ then whatever move Player 1 makes they would lose by induction, as $a-1, b-1\geq 2$. 
\end{proof}

In the rest of this section we extend Theorem \ref{thmcompletebiartire} to  most multipartite graphs. At this point we would like to remark that the only way to delete a vertex in a complete multipartite graph and, by doing so, isolate other vertices would be when we were making a move on $K_{1,n}$, which is a graph already studied in Theorem \ref{thmcompletebiartire}. This unique situation indicates that we should take a closer look at graphs that may have $K_{1,n}$ as a follower; $K_{1,1,n}$ is the only one of those graphs that has not been already studied in Theorem \ref{thmcompletebiartire}. Looking at the behavior of $K_{1,n}$ we obtained our next result.

\begin{lemma}\label{lemK11n}
Let $m,n\in \N$, and $G=K_{1,m,n}$. Then,
\begin{enumerate}
\item If $m=1$, then $G$ is an $\mathcal{N}$ position if and only if $n$ is even.

\item If $m=2$ and $n\geq 2$, then $G$ is an $\mathcal{N}$ position.

\item If $m,\ n\geq 2$, then $G$ is an $\mathcal{N}$ position if and only if $m+n$ is even.
\end{enumerate}
\end{lemma}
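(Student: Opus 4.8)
The plan is to run the standard $\mathcal{P}/\mathcal{N}$ recursion on the three followers of $K_{1,m,n}$, leaning on Theorem \ref{thmcompletebiartire} (for the complete bipartite followers) and Lemma \ref{thmcomplete} (for the $K_n$ base cases). Recall that a position is an $\mathcal{N}$ position exactly when it has at least one $\mathcal{P}$ follower, and a $\mathcal{P}$ position exactly when all its followers are $\mathcal{N}$ positions. So the first step is to list the followers of $G=K_{1,m,n}$ precisely. Writing the three parts as the apex $\{u\}$, a part $B$ of size $m$, and a part $C$ of size $n$, deleting $u$ yields $K_{m,n}$, deleting a vertex of $B$ yields $K_{1,m-1,n}$, and deleting a vertex of $C$ yields $K_{1,m,n-1}$. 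By the remark preceding the lemma, the only isolating move in a complete multipartite graph happens on a star $K_{1,n}$, so as long as both of the two larger parts stay nonempty these followers carry no hidden deletions; the boundary transitions (a part shrinking to size $1$, giving a star) are exactly where the parity pattern will break, and they must be tracked carefully.

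For part (1) I would induct on $n$. The followers of $K_{1,1,n}$ are $K_{1,n}$ (obtained two ways, by deleting either singleton) and $K_{1,1,n-1}$. Since $K_{1,n}$ is always an $\mathcal{N}$ position by Theorem \ref{thmcompletebiartire}(1), the only candidate $\mathcal{P}$ follower is $K_{1,1,n-1}$, so the status of $K_{1,1,n}$ is forced to flip with each decrement of $n$. Taking the base case $K_{1,1,1}=K_3$, which is a $\mathcal{P}$ position by Lemma \ref{thmcomplete} (as $3$ is odd), the alternation immediately gives ``$\mathcal{N}$ iff $n$ is even.''

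The cleanest route for parts (2) and (3) is to first isolate the reusable fact that if $a,b\geq 2$ and $a+b$ is even, then $K_{1,a,b}$ is an $\mathcal{N}$ position: deleting the apex produces $K_{a,b}$, and since $a,b>1$ with $a+b$ even, Theorem \ref{thmcompletebiartire}(2) makes $K_{a,b}$ a $\mathcal{P}$ position. With this in hand, part (2) is pure parity bookkeeping on the followers $K_{2,n}$ and $K_{1,1,n}$: if $n$ is even then $K_{2,n}$ is a $\mathcal{P}$ position (Theorem \ref{thmcompletebiartire}(2)), while if $n$ is odd then $K_{1,1,n}$ is a $\mathcal{P}$ position (part (1)); either way $K_{1,2,n}$ has a $\mathcal{P}$ follower and is an $\mathcal{N}$ position. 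For part (3) I would read the hypothesis as the genuinely new range $m,n\geq 3$ (the cases with a part of size $2$ are already settled, up to symmetry, by part (2)). If $m+n$ is even, deleting the apex gives the $\mathcal{P}$ position $K_{m,n}$, so $G$ is an $\mathcal{N}$ position. If $m+n$ is odd, I must instead verify all three followers are $\mathcal{N}$: the follower $K_{m,n}$ is an $\mathcal{N}$ position by Theorem \ref{thmcompletebiartire}(2) (odd sum), and each of $K_{1,m-1,n}$ and $K_{1,m,n-1}$ is an $\mathcal{N}$ position because its two larger parts are $\geq 2$ and sum to an even number, so the reusable fact (or part (2), when the reduced part equals $2$) applies.

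I expect the main obstacle to be the boundary bookkeeping rather than any deep idea: the naive ``parity of $m+n$'' rule is genuinely broken when a part is small, since reducing a part to size $1$ produces a star (always an $\mathcal{N}$ position) and keeping a part at size $2$ is governed by the constant-$\mathcal{N}$ behavior of part (2) instead of by parity. This is precisely why the statement must be split into the three regimes $m=1$, $m=2$, and $m,n\geq 3$, and it is also the source of the harmless overlap between parts (2) and (3) at $m=2$ (where part (2)'s unconditional $\mathcal{N}$ takes precedence over the parity reading of part (3)). Flagging that overlap and confirming that every follower invoked in the $m+n$ odd case of part (3) stays inside the already-established ranges is the delicate point; once the follower list and these small-part exceptions are pinned down, the remainder is a routine application of the $\mathcal{P}/\mathcal{N}$ recursion.
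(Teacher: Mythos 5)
Your proposal is correct, and its skeleton matches the paper's proof. Part (1) is the same recursion on $n$: the paper phrases it as a strategy argument with base cases $K_{1,1,1}$ and $K_{1,1,2}$, while you phrase it as a status flip forced by the fact that the follower $K_{1,n}$ is always an $\mathcal{N}$ position; these are the same argument. Part (2) is essentially word-for-word the paper's: move to $K_{2,n}$ when $n$ is even and to $K_{1,1,n}$ when $n$ is odd. The one place you genuinely diverge is part (3). The paper argues inductively, asserting that since neither player wants to leave $K_{1,2,k}$ as a follower the outcomes ``alternate'' with parity, anchored at the base case $K_{1,3,3}$; you instead give a direct, non-inductive check: for $m+n$ even, delete the apex to reach the $\mathcal{P}$ position $K_{m,n}$, and for $m+n$ odd, verify that all three followers $K_{m,n}$, $K_{1,m-1,n}$, $K_{1,m,n-1}$ are $\mathcal{N}$ positions via your reusable apex fact (with part (2) covering the case where a part drops to size $2$). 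Your version is tighter, since it proves precisely the alternation claim that the paper leaves as a sketch. Your reading of the hypothesis of part (3) is also the right one: as literally stated ($m,n\geq 2$), part (3) conflicts with part (2) when $m=2$ and $n$ is odd --- e.g.\ $K_{1,2,3}$ is an $\mathcal{N}$ position because it has the $\mathcal{P}$ follower $K_{1,1,3}$, contrary to the parity reading --- and the paper's own proof silently restricts part (3) to $m,n\geq 3$, exactly as you did.
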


\begin{proof}
We start by noticing that the case $m=n=1$ is immediate. We will prove the case $m=1, \ n>1$ by induction on $n$. Our base cases are $K_{1,1, 1}$ and $K_{1,1, 2}$. We know that the first graph is a $\mathcal{P}$ position, and clearly, Player 1 wins $K_{1,1,2}$ by leaving the follower $K_{1,1,1}$ after their first move. For $n>2$, Player 1 should start by leaving $K_{1,1,n-1}$ as a follower (otherwise Player 2 would win the game immediately). Player 2 could leave $K_{1,n-1}$ or $K_{1,1,n-2}$ as a follower. In the first case, Player 1 wins $G$, and in the second case induction forces the result we wanted.

Let us assume that $m=2$ and $n\geq 2$. If $n$ is even then Player 1 leaves $K_{2,n}$ as a follower, which by Theorem  \ref{thmcompletebiartire} is won by Player 1. If $n$ is odd then Player 1 leaves $K_{1,1,n}$ as a follower, and thus they win $G$ by using the strategy discussed above.

Finally, assume $m,\ n \geq 3$. Since none of the players would like to leave $K_{1,2,k}$ as a follower then the winners of these graphs will alternate depending on $n$ being even or odd. Thus we look at the `base case' $K_{1,3,3}$, which is an $\mathcal{N}$ position by Theorem  \ref{thmcompletebiartire}, to get the desired result.
\end{proof}

The complete tripartite graphs `missing' from Lemma \ref{lemK11n}, and most of the other complete multipartite graphs are considered in the following theorem.

\begin{theorem}\label{thmno1s}
Let $G = K_{n_1,n_2,\cdots,n_{t}}$, where $t>2$ and $n_i \geq 2$, for all $i=1,\ldots, t$. Then, $G$ is a $\mathcal{P}$ position if and only if $|V(G)|$ is even.
\end{theorem}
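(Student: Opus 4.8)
The plan is to argue by strong induction on $N=|V(G)|$, exploiting the structural remark preceding the statement: while playing on a complete multipartite graph no move can isolate a vertex unless the current graph is a star $K_{1,n}$, and such a star only appears once the position has been reduced to two nonempty parts, one of which has size $1$. Consequently, as long as $G$ has $t>2$ parts, every legal move simply deletes a single vertex, i.e. it decrements the size of one part, and produces another complete multipartite graph on $N-1$ vertices. So I first record that the followers of $G=K_{n_1,\dots,n_t}$ are exactly the graphs obtained by lowering one $n_i$ by $1$, and I split them into two types: (a) decrementing a part of size $\geq 3$, which keeps all parts $\geq 2$ and keeps $t$ parts; and (b) decrementing a part of size exactly $2$, which creates a part of size $1$, i.e. a follower of the form $K_{1,m_1,\dots,m_{t-1}}$ with all $m_j\geq 2$.

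For the direction where $N$ is odd I would show $G$ is an $\mathcal{N}$ position by exhibiting a single $\mathcal{P}$ follower. Since $N$ is odd and every part is $\geq 2$, the parts cannot all equal $2$ (else $N=2t$ would be even), so some part has odd size $\geq 3$; decrementing it gives a type-(a) follower with all parts $\geq 2$, still $t>2$ parts, and $N-1$ vertices. As $N-1$ is even, the induction hypothesis makes this follower a $\mathcal{P}$ position, so $G$ is $\mathcal{N}$. Note that this direction never leaves the class to which the theorem applies.

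The hard part is the direction where $N$ is even, where I must show that \emph{every} follower is an $\mathcal{N}$ position. The type-(a) followers have $N-1$ (odd) vertices and still lie in the theorem's class, so the induction hypothesis gives that they are $\mathcal{N}$. The genuine obstacle is the type-(b) followers $H=K_{1,m_1,\dots,m_{t-1}}$, which carry a part of size $1$ and hence fall outside the inductive class. To handle these I would use the singleton: deleting the unique size-$1$ vertex of $H$ isolates nothing (since $t-1\geq 2$, each remaining vertex still has a neighbor in some part of size $\geq 2$) and yields $K_{m_1,\dots,m_{t-1}}$, a complete multipartite graph on $N-2$ vertices with all parts $\geq 2$. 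Because $N$ is even, $N-2$ is even, so this graph is a $\mathcal{P}$ position either by Theorem \ref{thmcompletebiartire} when $t-1=2$ or by the induction hypothesis when $t-1\geq 3$. Thus the player to move on $H$ has a winning move and $H$ is $\mathcal{N}$; having shown all followers of $G$ are $\mathcal{N}$, we conclude $G$ is $\mathcal{P}$.

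Finally I would note that the recursion bottoms out on $K_{2,2,2}$ ($N=6$), whose only followers are copies of $K_{1,2,2}$; these are $\mathcal{N}$ by the same singleton move (reducing to the $\mathcal{P}$ position $K_{2,2}$ via Theorem \ref{thmcompletebiartire}), so the base case rests on the already-established bipartite result rather than circularly on the theorem itself. The point that most needs care is the bookkeeping that no \emph{unintended} isolation occurs in moves (a), (b), or the singleton deletion — that is, that each such move removes exactly one vertex — and this is precisely where the hypotheses $t>2$ and $n_i\geq 2$ are used.
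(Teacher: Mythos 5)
Your proposal is correct, but it takes a genuinely different route from the paper. The paper's proof is strategic: when $|V(G)|$ is even it hands Player 2 an explicit pairing strategy---answer a move in an even-sized part by deleting a second vertex of that same part, and answer a move in an odd-sized part by deleting a vertex of a different odd-sized part---and then inducts on the number of odd parts, anchored at the bipartite positions $K_{\text{odd, odd}}$ and $K_{\text{even, even}}$ of Theorem \ref{thmcompletebiartire}; when $|V(G)|$ is odd, Player 1 moves in an odd part and thereby inherits the winning second-player role. You instead run the standard $\mathcal{P}$/$\mathcal{N}$ recursion by strong induction on $|V(G)|$: the position is $\mathcal{P}$ exactly when every follower is $\mathcal{N}$, the in-class followers are disposed of by the induction hypothesis, and the only real work is the out-of-class followers $K_{1,m_1,\ldots,m_{t-1}}$, which you show are $\mathcal{N}$ positions via the singleton-deletion move down to an in-class or bipartite $\mathcal{P}$ position. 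Both arguments lean on Theorem \ref{thmcompletebiartire} and on the observation that no unintended isolation can occur while at least two nonempty parts remain. The trade-off: the paper's version produces an explicit strategy a player could actually follow over the course of a game, while yours is more mechanical and modular (there is no global strategy whose correctness must be tracked through all lines of play), and it yields as a byproduct that the positions $K_{1,m_1,\ldots,m_{t-1}}$ with all $m_j\geq 2$ and odd order are $\mathcal{N}$ positions, anticipating the lemma the paper proves immediately afterwards. One small repair: in your odd-$N$ case, the phrase ``the parts cannot all equal $2$'' only gives a part of size $\geq 3$, not an odd one; the correct (and immediate) justification is that an odd total forces some part to be odd, and an odd part of size $\geq 2$ has size $\geq 3$.
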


\begin{proof}
Let $G = K_{n_1,n_2,\cdots,n_{t}}$, where $t>2$ and $n_i \geq 2$, for all $i=1,\ldots, t$. We define $S = \{n_i ; \ n_i \equiv 1\pmod2 \}$.

Suppose that $|V(G)|$ is even and $n_i \geq 2$, for all $i=1,\ldots, t$.  Note that $|S|$ is either $0$ or even.

Case 1: $|S|=0$, i.e. each partition of $G$ contains an even number of vertices. The strategy for Player 2 is as follows: the game starts with Player 1 deleting any vertex from the graph. Since the partition where this vertex was will have at least one more vertex in it, Player 2 deletes a vertex in it. In this way, Player 2 always has an available move in the same partition that Player 1 has chosen to play on. Hence, Player 2 is always the last person to delete a vertex from every partition of $G$, and thus they will always have the last move in the game, as they will be the only person able to play in the state $K_{1,n_i}$, when that time comes.

Case 2: $|S| =2k$, for some non-zero $k\in \N$. Now the strategy for Player 2 is two-fold: if Player 1 moves in an `even' partition of $G$ then Player 2 follows the strategy in Case 1, but if Player 1 deletes a vertex in an `odd' partition then Player 2 deletes a vertex in one of the other odd partitions (there would be an odd number of them available). Note that after one move by each player, the follower obtained would either be a graph that Case 1 considers, or $G$ would have  $|S| =2(k-1)$, and thus an induction argument would finish the proof. Note that the base case(s) of this induction could be track down to knowing who wins $K_{\text{odd, odd}}$ or $K_{\text{even, even}}$, which are both $\mathcal{P}$ positions by Theorem \ref{thmcompletebiartire}.

Now suppose that $|V(G)|$ is odd and $n_i \geq 2$, for all $i=1,\ldots, t$. Since there must be some  $n_i\in S$, the strategy for Player 1 is to delete any vertex in a partition with an odd number of vertices. In this way, they would leave a follower with an even number of vertices that also has every partition containing more than $1$ vertex. For this follower, Player 1 will move second and thus will win the graph. 
\end{proof}

We now look at a specific family of multipartite graphs allowing singletons as partitions. 

\begin{lemma}
Let $G = K_{1,n_{2},\ldots,n_{t}}$, where $3<t$, $1\leq k\leq t$, and  $n_i \geq 2$, for all $i=2,\ldots, t$. Then, $G$ is a $\mathcal{P}$ position if and only if $|V(G)|$ is even and $n_i>2$, for all $i$.
\end{lemma}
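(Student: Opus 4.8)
The plan is to argue by strong induction on the order $N=|V(G)|$, analyzing the possible followers of $G=K_{1,n_2,\ldots,n_t}$. (I read the displayed part sizes literally, so $G$ has a single singleton part; the parameter $k$ in the hypothesis plays no role in what follows.) Writing $c$ for the unique singleton vertex, every legal move is of exactly one of three types: (i) deleting $c$, which produces $K_{n_2,\ldots,n_t}$, a complete multipartite graph with no singleton part and $t-1\geq 3$ parts, whose outcome is given by Theorem \ref{thmno1s}; (ii) deleting a vertex of a part $A_i$ with $n_i\geq 3$, which produces another member of the present family (one singleton, at least three parts of size $\geq 2$) on $N-1$ vertices, to which the induction hypothesis applies; or (iii) deleting a vertex of a part $A_i$ with $n_i=2$, which turns $A_i$ into a second singleton and produces a complete multipartite graph with \emph{two} singleton parts. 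Note that no single deletion isolates a vertex here, since at least three parts always remain; the collapsing rule only becomes relevant in the bipartite endgame already handled by Theorem \ref{thmcompletebiartire}. The governing parameters are the parity of $N$ and whether some $n_i$ equals $2$.

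Two of the required implications are then immediate. If $N$ is odd, Player~1 deletes $c$; the follower $K_{n_2,\ldots,n_t}$ has even order, hence is a $\mathcal{P}$ position by Theorem \ref{thmno1s}, so $G$ is an $\mathcal{N}$ position. For the $\mathcal{P}$ direction, suppose $N$ is even and every $n_i\geq 3$. Then no part has size $2$, so followers of type (iii) do not occur. The type-(i) follower has odd order and is therefore an $\mathcal{N}$ position by Theorem \ref{thmno1s}; each type-(ii) follower again lies in the present family, has odd order (and possibly a newly created size-$2$ part), and is an $\mathcal{N}$ position by the induction hypothesis. Since every follower is an $\mathcal{N}$ position, $G$ is a $\mathcal{P}$ position.

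The substance of the proof is the remaining case, $N$ even with some $n_i=2$, where one must exhibit a winning first move for Player~1. Deleting $c$ is useless (it leaves an odd-order position, which is $\mathcal{N}$ by Theorem \ref{thmno1s}), and reducing a part of size $\geq 3$ leaves an odd-order position in the present family, again $\mathcal{N}$ by induction; so the only possible winning move is of type (iii), creating a second singleton and landing in $K_{1,1,m_1,\ldots,m_r}$ with $r=t-2\geq 2$ non-singleton parts. These two-singleton graphs are not covered by Lemma \ref{lemK11n} (which treats only $K_{1,1,n}$) nor by the theorems above, so I would first establish an auxiliary characterization of complete multipartite graphs with exactly two singleton parts. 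Because deleting a size-$2$ part inside such a graph spawns a \emph{third} singleton, this auxiliary statement cannot be proved in isolation; the natural route is a single simultaneous induction on $N$ over the family of all complete multipartite graphs having $s\geq 2$ singleton parts, whose followers reduce either to smaller members of the same family, to the one-singleton family (by deleting a singleton), or to the no-singleton family of Theorem \ref{thmno1s}, with base cases supplied by $K_{m,n}$, $K_{1,m,n}$ and $K_{1,1,n}$.

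I expect this simultaneous induction to be the main obstacle, and the delicate point to be tracking how the outcome depends \emph{jointly} on the parity of $N$, the number $s$ of singleton parts, and the presence of a size-$2$ part: small computations suggest that a size-$2$ part can flip the outcome in some of these families but is irrelevant in others, and that its effect interacts with the parity of $s$. Getting this dependence exactly right --- and in particular confirming that the type-(iii) follower is genuinely a $\mathcal{P}$ position in every instance needed above --- is where the real care is required, and it is worth checking the precise form of the stated condition against the smallest members of the family (for example $K_{1,2,2,2}$ and $K_{1,3,3,3}$) before committing to the inductive bookkeeping.
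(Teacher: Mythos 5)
Your first two cases are correct and coincide with the paper's own argument: when $|V(G)|$ is odd, deleting the singleton leaves $K_{n_2,\ldots,n_t}$, a $\mathcal{P}$ position by Theorem \ref{thmno1s}; when $|V(G)|$ is even and every $n_i\geq 3$, every follower is an odd-order $\mathcal{N}$ position (by Theorem \ref{thmno1s} for your type-(i) follower, by induction for type (ii)), so $G$ is $\mathcal{P}$. You also correctly reduce the remaining case ($|V(G)|$ even, some $n_i=2$) to a single question: is the type-(iii) follower, the one with two singleton parts, a $\mathcal{P}$ position? Since you do not answer this, your proposal is incomplete, and this case is the actual content of the lemma.

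But the gap you flagged cannot be closed, because the statement is false, and your instinct to test it against small members of the family before ``committing to the inductive bookkeeping'' was exactly right --- you just need to test $K_{1,2,3,4}$ rather than $K_{1,2,2,2}$. The lemma claims $K_{1,2,3,4}$ (even order, with a part of size $2$) is an $\mathcal{N}$ position. Its followers are: $K_{2,3,4}$, which is $\mathcal{N}$ by Theorem \ref{thmno1s} (odd order); $K_{1,2,2,4}$ and $K_{1,2,3,3}$, which are $\mathcal{N}$ because deleting their singleton leaves $K_{2,2,4}$, respectively $K_{2,3,3}$, both $\mathcal{P}$ by Theorem \ref{thmno1s}; and the type-(iii) follower $K_{1,1,3,4}$, which is also $\mathcal{N}$, because deleting one of its singletons leaves $K_{1,3,4}$, a $\mathcal{P}$ position by Lemma \ref{lemK11n} (as $3+4$ is odd). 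Every follower of $K_{1,2,3,4}$ is therefore an $\mathcal{N}$ position, so $K_{1,2,3,4}$ is a $\mathcal{P}$ position, contradicting the lemma. (The same type-(iii) move does win in $K_{1,2,2,3}$, because $K_{1,1,2,3}$ turns out to be $\mathcal{P}$; the outcome of the two-singleton graphs oscillates, which is precisely the interaction you predicted.) It is worth knowing that the paper's own proof of this case is also broken: it instructs Player 1 to leave $K_{1,2,\ldots,2,n_{r+1}-1,\ldots,n_t}$ as a follower, an odd-order member of the same family, which by the (correct) first case of the lemma is an $\mathcal{N}$ position --- that is, a losing move. So the honest conclusion of your analysis is not a proof but a refutation: no argument along these lines, yours or the paper's, can establish the lemma as stated, and the characterization of the even-order graphs with a part of size $2$ has to be amended.
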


\begin{proof}
If $|V(G)|$ is odd then Player 1 leaves $K_{n_2,\ldots,n_{t}}$ as a follower, which is a $\mathcal{P}$ position (by Theorem \ref{thmno1s}), and thus Player 1 wins $G$. Now, if $|V(G)|$ is even then Player 1 would not want to leave $K_{n_2,\ldots,n_{t}}$ as a follower, as in this case, by Theorem \ref{thmno1s}, Player 2 would win $G$. Hence, Player 1 would leave WLOG $K_{1, n_2-1,n_3,\ldots,n_{t}}$ as a follower. If $n_2>2$ then by the previous case we would get that Player 2 wins $G$.  If we allow $r-1>0$ of the $n_i$ to be equal to $2$ then, after re-arrangement if needed, we get $G= K_{1, 2, \ldots 2, n_{r+1}, \ldots,n_{t}}$, where $n_{r+1},\ldots, n_t>2$. Note that $r< t$ because $|V(G)|$ is even. In this case, Player 1 leaves $K_{1, 2, \ldots 2, n_{r+1}-1, \ldots,n_{t}}$ as a follower, and thus Player 1 wins $G$ using the strategy discussed for the case $|V(G)|$ odd.
\end{proof}

There are several complete multipartite graphs that we have not studied, and we will not study in this paper; these, after a re-arrangement if needed, have the form $K_{1,\cdots, 1, n_{k+1},\ldots,n_{t}}$, where $3<t$, $1\leq k\leq t$, and  $n_i \geq 2$, for all $i=k+1,\ldots, t$. Some of them may be studied very easily, for example $K_{1,\ldots, 1}$ is a $\mathcal{P}$ position if and only if $t$ is odd. The conditions needed to understand others are much more complex. For instance, if $k>1$ and $k> \alpha_{k,t} = (n_{k+1}+\cdots + n_{t}) - (t-k)$, then $K_{1,\cdots, 1, n_{k+1},\ldots,n_{t}}$ is a $\mathcal{P}$ position if and only if $k$ and $\alpha_{k,t}$ have different parity. 

On the other hand, in order to understand the case $k\leq \alpha_{k,t}$ we need to look at these two parameters and also to \emph{how much} bigger than $k$ $\alpha_{k,t}$ is. 

Also, unlike most of the graphs studied in this section, $K_{1,n}$ and $K_{1,2,n}$ are always won by Player 1 independent on the parity of $n$. This situation is not that uncommon in the graphs we have not studied here, for instance the family of graphs of the form $K_{1,1,3,n}$, for $n\geq 3$, are also won by Player 1 by leaving the follower $K_{1,3,n}$ for $n$ even and $K_{1,1,2,n}$ for $n$ odd. There are several other families, similar to $K_{1,1,3,n}$, that may be proven to be won by Player 1 only.

All this suggests that a complete theory of dealing how Grim would play on multipartite graphs allowing singletons as partitions is very complex, and thus we plan to study it separately in a future article.

Other `standard' families of graphs can be studied in a similar fashion to what has been done in this section with complete and complete multipartite graphs. However, as it was seen in this section, and we will see again in Section \ref{sectionSG}, the analysis of Grim gains complexity quickly. Hence, in the following sections we will study Grim on graphs by using a less direct set of techniques.

%%%%%%%%%%%%%%%%%%%%%%%%%%%%%%%%%%%%%%%%%%%%%%%%%%%%%%%%
\section{Automorphisms}\label{sec4}

Certain symmetries in a graph can be exploited to obtain strategies for winning at Grim. In this section we prove a couple of results of this type, and then we apply them to a few families of graphs.

\begin{definition}
A function $\sigma$ is said to be an \textit{automorphism} of a graph $G$ if $\sigma$ permutes the vertices of $G$ while preserving incidence. The group (under composition) of all the automorphisms of $G$ is denoted $Aut(G)$.
\end{definition}

\begin{theorem}\label{autoP2}
Let $G$ be a graph, and assume that $Aut(G)$ has an element $\sigma$ of order $2$ such that $v\neq \sigma(v)$ and $v\sigma(v) \notin E(G)$, for all $v\in V(G)$. Then, $G$ is a $\mathcal{P}$ position. 
\end{theorem}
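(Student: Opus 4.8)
The plan is to give Player 2 an explicit mirroring strategy driven by $\sigma$: whenever Player 1 deletes a vertex $v$, Player 2 responds by deleting $\sigma(v)$. Since $\sigma$ is an involution with no fixed points (because $v\neq\sigma(v)$ for all $v$), it partitions $V(G)$ into pairs $\{v,\sigma(v)\}$, so in particular $|V(G)|$ is even. The goal is to show that this response is always available and that Player 2 thereby makes the last move.

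The central invariant I would maintain is: immediately before each of Player 1's moves, the current position $H$ (an induced subgraph of $G$) is invariant under $\sigma$, so that $\sigma$ restricts to an order-$2$ fixed-point-free automorphism of $H$ with no vertex adjacent to its image. This holds for $H=G$ by hypothesis. Assuming it before a given move, I first observe that deleting a single vertex $v$ removes only $v$ together with those neighbors of $v$ whose sole neighbor was $v$ (the pendant neighbors); no deeper cascade occurs, since a pendant neighbor carries no further edges. Because $v\sigma(v)\notin E(H)$, the vertex $\sigma(v)$ is not a neighbor of $v$ and hence is not among the vertices deleted by Player 1's move, so Player 2's intended reply $\sigma(v)$ is still present and legal.

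The key step, and the main obstacle, is to verify that the combined effect of the two moves leaves a $\sigma$-invariant graph. Writing $A$ for the set removed by Player 1 (namely $v$ and its pendant neighbors), I would compute exactly which vertices Player 2's deletion of $\sigma(v)$ removes from $H-A$. The delicate case is a common neighbor $u$ of both $v$ and $\sigma(v)$ of degree $2$: such a $u$ is not pendant in $H$, but becomes pendant at $\sigma(v)$ once $v$ is gone, so it is swept away by Player 2's move even though no mirror partner of it was removed by Player 1. The resolution is that $\sigma(u)$ is also a degree-$2$ common neighbor of $v$ and $\sigma(v)$ (apply $\sigma$ to the adjacencies), so $u$ and $\sigma(u)$ are removed together in Player 2's single move; thus the set of these extra deletions is itself $\sigma$-invariant. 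Combining this with the fact that the pendant neighbors of $\sigma(v)$ are exactly the $\sigma$-images of the pendant neighbors of $v$, the total set removed over the round is $A\cup\sigma(A)$ enlarged by a $\sigma$-invariant set, hence $\sigma$-invariant; therefore the resulting position is again $\sigma$-invariant, re-establishing the invariant.

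Finally, since Player 2 can respond to every move and the position strictly shrinks each round, the game terminates after a whole number of such rounds, with Player 2 making the final deletion. Hence Player 1 can never make the last legal move, and $G$ is a $\mathcal{P}$ position. The only real care required is the bookkeeping of the cascade in the inductive step above; the termination and always-a-reply parts follow directly once the invariant is in place.
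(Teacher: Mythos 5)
Your proposal is correct and takes essentially the same approach as the paper: Player 2 mirrors each deletion of a vertex $v$ by deleting $\sigma(v)$. The paper justifies the availability of the mirror move by a contradiction argument on the first failure (and is rather terse about cascade deletions), whereas you maintain the $\sigma$-invariance of the position inductively with explicit bookkeeping of the isolated-vertex cascade (including the degree-$2$ common neighbors of $v$ and $\sigma(v)$); this is a more careful rendering of the same strategy, not a different one.
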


\begin{proof}
Let $\sigma \in Aut(G)$ be such that $v\neq \sigma(v)$ and $v\sigma(v) \notin E(G)$, for all $v\in V(G)$.  We will prove that if Player 1 moves by selecting a vertex $v$, then Player 2 can always move by selecting $\sigma(v)$.

We will proceed by contradiction. Suppose that at a certain stage of the game, and for the first time, Player 1 moves by selecting a vertex $v$ leaving a follower that does not contain $\sigma(v)$. If $\sigma(v)$ was already deleted when Player 1 made their move, then either $\sigma(v)$ was deleted by Player 2, which would contradict that $v$ had not been deleted at this point, or $\sigma(v)$ was deleted by becoming isolated, which would force (as $\sigma \in Aut(G)$) $v$ to be isolated. In either case, the situation is impossible. So, the only option would be that $\sigma(v)$ gets isolated at the time of deleting $v$. But $v$ and $\sigma(v)$ are not adjacent, so deleting $v$ cannot isolate $\sigma(v)$.
\end{proof}

\begin{corollary} \label{autoP1}
Let $G$ be a graph with no isolated vertices, and assume that  $Aut(G)$ has an element $\sigma$ of order $2$ such that $v_0 = \sigma(v_0)$, for exactly one $v_0\in V(G)$, and $v\sigma(v) \notin E(G)$, for all $v\in V(G)$. Then, $G$ is an $\mathcal{N}$ position. 
\end{corollary}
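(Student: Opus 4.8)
The plan is to let Player 1 open the game by deleting the unique fixed vertex $v_0$, thereby handing Player 2 a follower on which $\sigma$ restricts to a fixed-point-free involution of exactly the kind required by Theorem \ref{autoP2}; that theorem then certifies the follower to be a $\mathcal{P}$ position, so the player to move on it (namely Player 2) cannot win, which makes $G$ an $\mathcal{N}$ position.

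First I would check that deleting $v_0$ is a genuine first move and track what it destroys. Since $G$ has no isolated vertices, $v_0$ has at least one neighbor, so it is an ordinary vertex; after it is removed, the rules of Grim also delete any vertex that has become isolated. The crucial observation is that these collateral deletions respect $\sigma$: if a vertex $w \neq v_0$ becomes isolated upon removing $v_0$, then $v_0$ was its only neighbor, and applying the automorphism $\sigma$ (which fixes $v_0$) shows that $v_0 = \sigma(v_0)$ is likewise the only neighbor of $\sigma(w)$, so $\sigma(w)$ is isolated as well. Because $v_0$ is the only fixed point of $\sigma$, we have $w \neq \sigma(w)$, and hence the vertices that disappear along with $v_0$ come in genuine $\sigma$-pairs. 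Consequently the resulting follower $G'$ is $\sigma$-invariant.

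Next I would verify the hypotheses of Theorem \ref{autoP2} for $G'$. The restriction $\sigma|_{G'}$ is an automorphism of order dividing $2$; it still satisfies $v\sigma(v)\notin E(G')$ for every $v \in V(G')$, inherited from $G$; and it has no fixed points, since the only fixed point $v_0$ has been removed. If $G'$ is nonempty this forces $\sigma|_{G'}$ to have order exactly $2$ (the identity would fix every vertex), so Theorem \ref{autoP2} applies and $G'$ is a $\mathcal{P}$ position. If instead deleting $v_0$ wipes out the entire graph, then Player 1 has already made the last move and wins outright; either way the desired conclusion holds. Combining these, Player 2 must move on the $\mathcal{P}$ position $G'$ (or has no move at all), so Player 2 has no winning strategy, which is precisely the assertion that $G$ is an $\mathcal{N}$ position.

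The main obstacle, and the step deserving the most care, is the bookkeeping in the middle paragraph: one must be certain that the collateral isolated-vertex deletions triggered by removing $v_0$ never break the $\sigma$-symmetry of the board, and in particular never strand a single unpaired vertex. The uniqueness of the fixed point is exactly what guarantees this, and it is also what makes the whole argument dovetail with Theorem \ref{autoP2}, whose fixed-point-free hypothesis would otherwise fail.
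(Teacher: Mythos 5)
Your proposal is correct and follows exactly the paper's approach: Player 1 deletes the fixed vertex $v_0$ and the resulting follower is a $\mathcal{P}$ position by Theorem \ref{autoP2}. The paper states this in one line, while you additionally verify the details it leaves implicit --- that collateral isolated-vertex deletions come in $\sigma$-pairs so the follower really satisfies the hypotheses of Theorem \ref{autoP2}, and the degenerate case where deleting $v_0$ empties the graph --- which is a worthwhile strengthening of the same argument, not a different one.
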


\begin{proof}
Player 1 starts the game by deleting $v_0$, leaving a follower that is a $\mathcal{P}$ position by Theorem \ref{autoP2}. Hence, $G$ is an $\mathcal{N}$ position.
\end{proof}

The following corollary summarizes a few applications of Theorem \ref{autoP2} and Corollary \ref{autoP1}.

\begin{corollary}\label{coroddpathscycleswheels}
Let $n\in \N$. 
\begin{enumerate}
\item[\textbf{(1)}]   If $n$ is odd, and $n\geq 3$, then $P_n$ is an $\mathcal{N}$ position.
\item[\textbf{(2)}]   If $n$ is even, and $n\geq 4$,  then $C_n$ is a $\mathcal{P}$ position.
\item[\textbf{(3)}]   If $n$ is odd, and $n\geq 5$,  then $W_n$ is an $\mathcal{N}$ position.
\item[\textbf{(4)}]   $G\cup G$ is a $\mathcal{P}$ position, for all non-empty graphs $G$.
\end{enumerate}
\end{corollary}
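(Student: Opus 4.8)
The plan is to prove each of the four parts of Corollary \ref{coroddpathscycleswheels} by constructing an explicit order-$2$ automorphism $\sigma$ satisfying the hypotheses of either Theorem \ref{autoP2} or Corollary \ref{autoP1}, and then invoking the appropriate result. In every case the key is to exhibit a ``reflection'' (or a swap between two copies) that has no fixed point adjacent to its image, so that the required non-adjacency condition $v\sigma(v)\notin E(G)$ holds.

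For part \textbf{(1)}, label the vertices of $P_n$ as $1,2,\ldots,n$ with $n$ odd, and let $\sigma$ be the reflection $\sigma(i)=n+1-i$. Since $n$ is odd, the unique fixed point is the center $v_0=(n+1)/2$. The condition $v\sigma(v)\notin E(G)$ holds because a non-fixed vertex $i$ and its mirror $n+1-i$ differ by at least $2$ (they are adjacent only if $|i-(n+1-i)|=1$, which is impossible for integer $i$ when $n$ is odd), so they are never neighbors in the path; the hypothesis $n\geq 3$ guarantees $G$ has no isolated vertices. Thus Corollary \ref{autoP1} applies and $P_n$ is an $\mathcal{N}$ position. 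For part \textbf{(2)}, label $C_n$ as $\mathbb{Z}/n\mathbb{Z}$ with $n$ even, and take the antipodal map $\sigma(i)=i+\tfrac{n}{2}$. This is an order-$2$ automorphism with no fixed points, and $v$ is adjacent to $\sigma(v)$ only if $\tfrac{n}{2}\equiv \pm 1 \pmod n$, i.e. only if $n\le 4$ forces a coincidence; for $n\geq 4$ one checks $\tfrac{n}{2}\neq 1$ and $\tfrac{n}{2}\neq n-1$, so $v\sigma(v)\notin E(C_n)$. Hence Theorem \ref{autoP2} gives that $C_n$ is a $\mathcal{P}$ position.

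For part \textbf{(3)}, write $W_n$ as the join of a hub with the cycle $C_{n-1}$ (so there are $n$ vertices, $n-1$ on the rim and one hub). With $n$ odd, $n-1$ is even, so the rim is an even cycle; let $\sigma$ fix the hub and act on the rim by the antipodal map from part \textbf{(2)}. This $\sigma$ has exactly one fixed point $v_0$ equal to the hub, and the non-adjacency condition holds on the rim exactly as in part \textbf{(2)} (using $n-1\ge 4$, i.e. $n\ge 5$); crucially the hub is fixed so it is never the image of a distinct vertex, and no rim vertex maps to a vertex adjacent to it. Therefore Corollary \ref{autoP1} applies and $W_n$ is an $\mathcal{N}$ position. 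For part \textbf{(4)}, take two disjoint copies of $G$, label their vertices $v$ and $v'$ respectively, and let $\sigma$ swap the copies via $\sigma(v)=v'$ and $\sigma(v')=v$. This is an order-$2$ automorphism of $G\cup G$ with no fixed points, and since the two copies are disjoint there is never an edge between $v$ and $v'$, so $v\sigma(v)\notin E(G\cup G)$ trivially. Theorem \ref{autoP2} then shows $G\cup G$ is a $\mathcal{P}$ position.

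The main obstacle is the verification of the non-adjacency condition $v\sigma(v)\notin E(G)$ in the cycle and wheel cases, where it genuinely depends on the parity and the lower bound on $n$: the antipodal map on $C_n$ fails the hypothesis precisely for the small cases excluded by $n\geq 4$ (and correspondingly $n\geq 5$ for the wheel), so the bounds in the statement are not cosmetic but exactly what is needed to keep $\tfrac{n}{2}$ from coinciding with a neighbor. The path and the $G\cup G$ cases are comparatively routine once the correct $\sigma$ is written down, since the fixed-point structure and non-adjacency are immediate from the labeling.
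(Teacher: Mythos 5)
Your proposal is correct and takes essentially the same approach as the paper: the paper's proof also applies Corollary \ref{autoP1} with the reflection of the path (part \textbf{(1)}), Theorem \ref{autoP2} with the antipodal map of the even cycle (part \textbf{(2)}), Corollary \ref{autoP1} with that same map extended to fix the hub of the wheel (part \textbf{(3)}), and Theorem \ref{autoP2} with the copy-swapping involution (part \textbf{(4)}). Your write-up merely makes explicit the fixed-point and non-adjacency verifications that the paper leaves to the reader.
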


\begin{proof}
\textbf{(1)} Use Corollary \ref{autoP1} with $\sigma$ the reflection across the middle vertex of $P_n$.  \\
\textbf{(2)} Modeling $C_n$ with a regular $n$-gon. Use Theorem \ref{autoP2} with $\sigma$ defined by reflecting every vertex across the center of the polygon. \\
\textbf{(3)} Use the same function in the proof of \textbf{(2)}, but now apply Corollary \ref{autoP1}, as the center of the wheel is fixed. \\
\textbf{(4)} Use Theorem \ref{autoP2} with $\sigma$ the identification between the two copies of $G$.
\end{proof}

Theorem \ref{autoP2} and Corollary \ref{autoP1} can also be used in the study of Cartesian products of graphs. 

\begin{definition}
Let $G$ and $H$ be two disjoint graphs. The \textit{Cartesian product} of $G$ and $H$, denoted $G\square H$, is defined by $V(G\square H)= V(G) \times V(H)$ and edges given by:
\begin{enumerate}
\item $(x,y)$ is adjacent to $(w,z)$ if $x=w$ and $y$ and $z$ are adjacent in $H$, or
\item $(x,y)$ is adjacent to $(w,z)$ if $y=z$ and $x$ and $w$ are adjacent in $G$.
\end{enumerate}
where $x,w \in V(G)$ and $y,z\in V(H)$.
\end{definition}

\begin{corollary}
Let $n>1$, $G_1,\cdots, G_n$ be graphs, and $G=G_{1} \square \cdots \square G_{n}$.
\begin{enumerate}
\item[\textbf{(1)}]   If $G_1,\cdots, G_n$ satisfy the hypothesis in Theorem  \ref{autoP2}, then, $G$ is a $\mathcal{P}$ position. 
\item[\textbf{(2)}]   If $G_1,\cdots, G_n$ satisfy the hypothesis in Corollary \ref{autoP1}, then $G$ is  an $\mathcal{N}$ position.
\end{enumerate}
\end{corollary}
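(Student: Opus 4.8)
The plan is to manufacture, out of the involutions on the factors, a single involution on the whole product and then simply invoke Theorem \ref{autoP2} and Corollary \ref{autoP1}. For each $i$ let $\sigma_i \in Aut(G_i)$ be the order-$2$ automorphism guaranteed by the relevant hypothesis, and define $\sigma$ on $V(G) = V(G_1) \times \cdots \times V(G_n)$ coordinate-wise by
\[
\sigma(x_1, \ldots, x_n) = (\sigma_1(x_1), \ldots, \sigma_n(x_n)).
\]
First I would verify that $\sigma \in Aut(G)$ and that $\sigma$ has order $2$. The order is immediate: $\sigma^2$ acts as $\sigma_i^2 = \mathrm{id}$ in every coordinate, so $\sigma^2 = \mathrm{id}$, while $\sigma \neq \mathrm{id}$ because each $\sigma_i$ is a genuine involution. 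That $\sigma$ preserves incidence rests on the standard single-coordinate characterization of adjacency in a Cartesian product (which extends to $n$ factors by associativity of $\square$): two distinct vertices are adjacent precisely when they agree in all but one coordinate and are adjacent in the remaining one. Since each $\sigma_i$ preserves incidence in $G_i$, applying $\sigma$ maps such a pair to another such pair, and being a product of bijections $\sigma$ is a bijection; hence $\sigma \in Aut(G)$.

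For part \textbf{(1)}, I would use that each $\sigma_i$ is fixed-point-free. Then for every $v = (x_1, \ldots, x_n)$ we have $\sigma_i(x_i) \neq x_i$ in \emph{every} coordinate, so $v$ and $\sigma(v)$ differ in all $n \geq 2$ coordinates. As adjacency in the product forces agreement in all but one coordinate, $v$ and $\sigma(v)$ cannot be adjacent; in particular $v \neq \sigma(v)$ and $v\sigma(v) \notin E(G)$. Thus $\sigma$ satisfies the hypothesis of Theorem \ref{autoP2}, and $G$ is a $\mathcal{P}$ position.

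For part \textbf{(2)}, each $\sigma_i$ now has a unique fixed point $v_0^{(i)}$, so $\sigma$ fixes $(x_1, \ldots, x_n)$ if and only if $x_i = v_0^{(i)}$ for every $i$; hence $\sigma$ has the single fixed point $v_0 = (v_0^{(1)}, \ldots, v_0^{(n)})$. The non-adjacency condition needs a short case split on the number of coordinates in which $v$ and $\sigma(v)$ differ: if they differ in two or more coordinates they are non-adjacent exactly as in part \textbf{(1)}; if they differ in exactly one coordinate $j$, they agree outside $j$ and carry $x_j, \sigma_j(x_j)$ there, which are non-adjacent in $G_j$ by the hypothesis of Corollary \ref{autoP1}, so again $v$ and $\sigma(v)$ are non-adjacent. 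Finally, since each $G_i$ has no isolated vertices (part of the Corollary \ref{autoP1} hypothesis), every coordinate of a vertex has a neighbor, whence $G$ has none either. So $\sigma$ meets all hypotheses of Corollary \ref{autoP1}, and $G$ is an $\mathcal{N}$ position.

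I expect the only delicate points to be the verification that the coordinate-wise map is genuinely an automorphism of the product, which hinges entirely on the single-coordinate description of adjacency in $\square$, and the one-coordinate subcase of part \textbf{(2)}, where the factor-level non-adjacency $x_j\sigma_j(x_j) \notin E(G_j)$ is precisely what is invoked. Everything else is routine bookkeeping.
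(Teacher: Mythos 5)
Your proof is correct and takes essentially the same approach as the paper: both construct the coordinate-wise involution $\sigma(x_1,\ldots,x_n)=(\sigma_1(x_1),\ldots,\sigma_n(x_n))$ and then apply Theorem \ref{autoP2} for part \textbf{(1)} and Corollary \ref{autoP1} for part \textbf{(2)}. The only difference is that you supply the verification (the single-coordinate characterization of adjacency in $\square$, the fixed-point count, the non-adjacency of $v$ and $\sigma(v)$, and the absence of isolated vertices) that the paper compresses into ``it is easy to see.''
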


\begin{proof}
\textbf{(1)}  Let $\sigma_i \in Aut(G_i)$ be the automorphism satisfying the hypothesis in Theorem  \ref{autoP2}. It is easy to see that $\sigma$ defined by
\[
\sigma(v_1, \ldots, v_n) = (\sigma_1(v_1),  \ldots , \sigma_n(v_n) )
\]
is an automorphism of $G$ also satisfying the hypothesis in Theorem  \ref{autoP2}. The result follows. \\
The proof for \textbf{(2)}  uses the very same ideas, so we omit it.
\end{proof}

An attempt to find strategies that allow us to determine who would win $G\square H$, for arbitrary graphs $G$ and $H$, was made but it failed because we realized that keeping track of all the possible avenues the game could take was an impossible task.  A natural approach to how to `keep track' of what happens in a game is discussed in the following section.

%%%%%%%%%%%%%%%%%%%%%%%%%%%%%%%%%%%%%%%%%%%%%%%%%%%%%%%%%
\section{The Sprague-Grundy Function}\label{sectionSG}

One of the most useful tools in the study of combinatorial games is the Sprague-Grundy function. We proceed to define it, in the context of Grim, and then we focus our efforts on finding its values  for paths.

\begin{definition}
Let $\N_0 = \N \cup \{0\}$ and $G$ be a graph. We define $\mathcal{F}(G)$ to be the set of all followers of $G$ (in the game Grim). The \emph{Sprague-Grundy function} of $G$ is a function $\mathcal{SG}: V \to \N_0$, defined recursively as follows
\[ 
\mathcal{SG}(G) = \min\{n \in \N_0 ; \ n \neq \mathcal{SG}(H), \text{ for all } H \in \mathcal{F}(G) \}.
\]
If we define the minimal excludant, or mex, of a set of non-negative integers as the smallest non-negative integer not in the set, then we may write
\[
\mathcal{SG}(G) =\mex\{\mathcal{SG}(H) ; \  H \in \mathcal{F}(G) \}.
\]
\end{definition}

\begin{remark}\label{rem0or1}
$\mathcal{SG}(G)= 0$ if and only if $G$ is a $\mathcal{P}$ position.
\end{remark}

The process to calculate the Sprague-Grundy values by hand is laborious, so we wrote a program in Visual FoxPro to compute the Sprague-Grundy value of Grim on paths. We focused on paths because in Corollary \ref{coroddpathscycleswheels} we learned that odd paths are $\mathcal{N}$ positions, but when wanting to look at even paths no pattern seemed to exist.

Using our program, we were able to compute all the values of $\mathcal{SG}(\mathcal P_n)$\footnote{A library of these values may be obtained at  www.gamecalledgrim.com.} for up to $n = 10^7$. Using these results we know that if $n\leq 10^7$, then $\mathcal{SG}(\mathcal P_n)=0$ only when 
\[
n \in \{4, 12, 20, 30, 46, 72, 98, 124, 150, 176, 314, 408\},
\]
which is something \L\c acko and \L\c acki (unpublished work, see \cite{LL}) had also determined (although they consider $\mathcal P_1$ a $\mathcal{P}$ position while we do not consider $\mathcal P_1$ a graph where we would play Grim, as all isolated vertices in a graph are deleted before a game starts). Moreover,  they also claim that  $\mathcal{SG}(\mathcal P_n)\neq 0$ for all values in the range $10^7\leq n\leq 10^8$. The obvious question, at this point, is whether or not there are any other even paths with $\mathcal{SG}(\mathcal P_n)=0$. It turns out that the answer to this question is related to a different open problem in game theory.

\begin{definition}
An \emph{octal game} is a normal impartial game played on heaps of chips. A move consists of taking a number of chips from a single heap, eliminating them, and then redistributing the remaining chips in the heap (if any left) into 0 (all chips in the heap were eliminated), 1 (leave the remaining chips intact), or 2 heaps; this number is fixed in advance for the game and depends only on how many chips were eliminated in the move. Given that there are several variables to determine at the beginning of the game, the rules of the game are encoded into a string of digits as follows: \\
If in a move we remove $k$ chips from a heap and we are allowed to break the remaining ones into $a$ heaps then we say that $\chi_k(a)=1$. In the case that we were not allowed to break the remaining ones into $a$ heaps we say that $\chi_k(a)=0$. The code of the game is $d=0.d_1d_2\ldots$, where
\[
d_k=\chi_k(0) 2^0+ \chi_k(1) 2^1+ \chi_k(2) 2^2.
\]

It follows that, since $6=2^{1}+2^{2}$, the game \emph{Octal $.6$} is a game in which every move consists of one chip being removed from a heap and what remains of that heap must be left in exactly 1 or 2 non-empty heaps.
\end{definition}

Several octal games have Sprague-Grundy sequences that after a while become periodic (see \cite{FOctal}, for example); it is not know whether Octal $.6$ has a period. This is relevant to us because the terms in the sequences of Sprague-Grundy numbers for Grim on paths and Octal $.6$ are the same  (see Theorem \ref{thmoctalSGgrimpaths} below). Thus our question about values of $n$ for which $\mathcal{SG}(\mathcal P_n)=0$ becomes a problem about when Octal $.6$ is won by Player 2, which is an open problem (e.g. see \cite{CGGN}). 

In order to create some intuition on the reader about our next result, we notice that we can identify removing a chip from a heap in Octal $.6$ to deleting a vertex in a path (under Grim rules). More explicitly, depending on what vertex we delete, we always leave one or two paths left; these two paths would be the equivalent of the two heaps left after removing a chip in Octal $.6$.

\begin{theorem}\label{thmoctalSGgrimpaths}
The Sprague-Grundy sequence of values of Grim played on paths is equivalent to the Sprague-Grundy sequence of values of Octal $.6$.
\end{theorem}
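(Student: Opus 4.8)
The plan is to establish a bijective correspondence between the positions of Grim played on paths and the positions of Octal $.6$, and then show that this correspondence commutes with legal moves, so that the two Sprague-Grundy functions satisfy the same recursion. First I would set up the dictionary: a single path $P_n$ corresponds to a single heap of $n$ chips. More generally, since a Grim position reachable from a path is always a disjoint union of paths, I would encode a disjoint union $P_{n_1} \cup P_{n_2} \cup \cdots \cup P_{n_r}$ as the Octal $.6$ position consisting of heaps of sizes $n_1, n_2, \ldots, n_r$. The key structural observation, already flagged in the text preceding the statement, is that deleting an interior vertex of a path $P_n$ splits it into two shorter paths, while deleting an endpoint leaves a single shorter path — and this is exactly the move rule of Octal $.6$, where removing one chip from a heap leaves the remainder as either one or two nonempty heaps.

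The crucial subtlety that must be handled carefully is the vertex-isolation rule of Grim, which has no literal analogue in the heap picture. When a vertex is deleted in $P_n$, a neighbor can become isolated; specifically, deleting the second vertex from an end of a path isolates the endpoint, and that isolated vertex is then automatically removed. I would argue that this is precisely what keeps the correspondence honest: the Grim rule forbids leaving a component of size $1$ (an isolated vertex), which matches the Octal $.6$ requirement that the leftover chips be redistributed into nonempty heaps of the allowed sizes, and that single-chip heaps behave consistently. Concretely, I would enumerate the possible single-vertex deletions on $P_n$ — deleting an endpoint yields $P_{n-1}$; deleting the vertex adjacent to an endpoint isolates the endpoint and yields $P_{n-2}$; deleting an interior vertex $v$ with $i-1$ vertices on one side and $n-i$ on the other yields $P_{i-1} \cup P_{n-i}$ after any resulting isolated vertices are cleared — and check that the multiset of resulting path-lengths matches exactly the multiset of heap configurations reachable in one Octal $.6$ move on a heap of size $n$.

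With the move-correspondence in hand, the proof reduces to an induction. By the Sprague-Grundy theory for disjoint sums of impartial games, the $\mathcal{SG}$ value of a disjoint union of paths is the nim-sum (binary XOR) of the $\mathcal{SG}$ values of the individual paths, and the same additivity holds for Octal $.6$ across independent heaps; both facts are instances of the Sprague-Grundy Theorem cited earlier. Thus it suffices to prove $\mathcal{SG}(P_n)$ equals the Grundy value of a single heap of size $n$ in Octal $.6$ for all $n$, which follows by strong induction on $n$: the set of follower $\mathcal{SG}$-values on the Grim side and the set of option Grundy-values on the Octal side coincide by the move-correspondence and the induction hypothesis, so their $\mex$ values agree. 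The main obstacle, and the step deserving the most care, is verifying that the isolation rule produces no discrepancy at the small boundary cases — in particular checking how $P_1$, $P_2$, and $P_3$ map across, and reconciling the paper's convention (isolated vertices, and hence $P_1$, are removed before play and are not genuine Grim positions) with the Octal $.6$ convention for heaps of size $0$ and $1$. This is exactly the source of the index shift the authors note in comparing their list of $\mathcal{P}$-positions to that of \L\c acko and \L\c acki, so I would make the base-case bookkeeping explicit to ensure the two sequences are genuinely term-by-term identical rather than merely shifted.
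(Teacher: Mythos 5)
Your proposal takes essentially the same route as the paper's proof: strong induction on $n$, a term-by-term matching of the followers of $\mathcal P_{n+1}$ with the options of a heap of size $n+1$ in Octal $.6$, nim-sum additivity over disjoint components, and a concluding mex comparison. Your extra bookkeeping around the isolation rule is sound and in fact slightly more careful than the paper itself, which lists followers such as $\mathcal P_{n-1} \cup \mathcal P_1$ even though the isolated vertex is automatically removed under Grim's rules --- a harmless discrepancy precisely because $\mathcal{SG}(\mathcal P_1)=0$ contributes nothing to the nim-sum, which is the point your enumeration makes explicit.
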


\begin{proof}
Let $n\in \N$, $\mathcal{O}_n$ be Octal $.6$ played on a heap of size $n$, and $\mathcal P_n$ be Grim played on a path on $n$ vertices. We will prove that $\mathcal{SG}(\mathcal P_n) = \mathcal{SG}(\mathcal{O}_n)$. We will do this by induction on $n$.

For $n=1$ we get that both games are trivial, and thus $\mathcal{SG}(\mathcal P_1)=\mathcal{SG}(\mathcal O_1)=0$. We now assume that  $\mathcal{SG}(\mathcal P_i)=\mathcal{SG}(\mathcal O_i)$, for all $i\leq n$.

First we notice that the followers of $\mathcal P_{n+1}$ and $\mathcal{O}_{n+1}$ are:
\begin{eqnarray*}
\mathcal{F}(\mathcal P_{n+1}) & = &\{ \mathcal P_n,  \mathcal P_{n-1} \cup \mathcal P_1, \mathcal P_{n-2} \cup \mathcal P_2, \ldots\} \\
\mathcal{F}(\mathcal O_{n+1}) & =& \{\mathcal O_n, \mathcal O_{n-1} \cup \mathcal O_1, \mathcal O_{n-2} \cup \mathcal O_2, \ldots \}.
\end{eqnarray*}

Using Nim Sum (see \cite{WWMG}), denoted $+_N$ below, to evaluate the Sprague-Grundy number of the followers of $\mathcal P_{n+1}$ and $\mathcal{O}_{n+1}$, we get:
\begin{eqnarray*}
\mathcal{SG}(\mathcal P_{n+1}) & = & \mex\{\mathcal{SG}(\mathcal P_n), \mathcal{SG}(\mathcal P_{n-1})+_N\mathcal{SG}(\mathcal P_1), \mathcal{SG}(\mathcal P_{n-2})+_N\mathcal{SG}(\mathcal P_2), \ldots \} \\
\mathcal{SG}(\mathcal O_{n+1}) & = & \mex\{\mathcal{SG}(\mathcal O_n), \mathcal{SG}(\mathcal O_{n-1})+_N\mathcal{SG}(\mathcal O_1), \mathcal{SG}(\mathcal O_{n-2})+_N\mathcal{SG}(\mathcal O_2), \ldots \}.
\end{eqnarray*}

Using the inductive hypothesis and Nim Sum we get that 
\begin{eqnarray*}
\mathcal{SG}(\mathcal P_n)  &= & \mathcal{SG}(\mathcal O_n) \\
\mathcal{SG}(\mathcal P_{n-1})+_N \mathcal{SG}(\mathcal P_1) &=& \mathcal{SG}(\mathcal O_{n-1})+_N \mathcal{SG}(\mathcal O_1) \\
&  \vdots &
\end{eqnarray*}

Hence, $\mathcal{SG}(\mathcal P_{n+1})=\mathcal{SG}(\mathcal O_{n+1})$.
\end{proof}

If we knew all the values in the sequence $\{\mathcal{SG}(\mathcal P_n)\}_{n=1}^{\infty}$ we would also know several other Sprague-Grundy sequences. We present two results of this type, without a proof, in the following corollaries.

\begin{corollary}
The Sprague-Grundy sequence for Grim on cycles is given by: 
\[
\mathcal{SG}(\mathcal C_n) = 
\begin{cases} 1 & \text{if } \mathcal{SG}(\mathcal P_{n-1})=0 \\ 
0 & \text{otherwise}
\end{cases}
\]
In particular, $\mathcal C_n$ is a $\mathcal{P}$ position when $n$ is even.
\end{corollary}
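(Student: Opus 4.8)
The plan is to exploit the fact that a cycle offers essentially only one kind of move, so that its set of Grim-followers is a singleton. First I would observe that $C_n$ is vertex-transitive and that every vertex has degree $2$. Deleting any single vertex $v$ removes the two edges incident to $v$, but for $n\geq 3$ each of $v$'s two neighbors retains its other edge, so no vertex becomes isolated and the Grim rule about deleting newly-isolated vertices never triggers. Hence deleting any vertex of $C_n$ produces the path $P_{n-1}$, and by symmetry this is the only possible follower. I would conclude that $\mathcal{F}(C_n) = \{P_{n-1}\}$.

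Given this, the Sprague-Grundy value drops out immediately from the definition of mex applied to a one-element set. I would write
\[
\mathcal{SG}(C_n) = \mex\{\mathcal{SG}(P_{n-1})\}.
\]
Since the mex of a singleton $\{a\}$ equals $1$ when $a=0$ and equals $0$ otherwise, this produces exactly the claimed case split: $\mathcal{SG}(C_n)=1$ precisely when $\mathcal{SG}(P_{n-1})=0$, and $\mathcal{SG}(C_n)=0$ in every other case.

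For the ``in particular'' clause I would invoke Remark \ref{rem0or1}, which identifies $\mathcal{P}$ positions with Sprague-Grundy value $0$. When $n$ is even with $n\geq 4$, the index $n-1$ is odd and at least $3$, so Corollary \ref{coroddpathscycleswheels}(1) gives that $P_{n-1}$ is an $\mathcal{N}$ position, i.e. $\mathcal{SG}(P_{n-1})\neq 0$. The case split above then forces $\mathcal{SG}(C_n)=0$, so $C_n$ is a $\mathcal{P}$ position, as desired.

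The argument is short and I do not anticipate a serious obstacle; the only step demanding genuine care is the first one, namely confirming that deleting a vertex of $C_n$ never isolates another vertex and therefore always returns $P_{n-1}$ rather than a disconnected or shorter graph. This should be verified explicitly in the smallest case $n=3$, where $C_3=K_3$ and the follower is $P_2$, but for all $n\geq 3$ the uniform degree-$2$ structure of the cycle makes the claim routine.
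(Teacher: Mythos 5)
Your proposal is correct, and it is exactly the intended argument: the paper states this corollary without proof (it is one of the results given ``without a proof'' as consequences of knowing $\mathcal{SG}(\mathcal P_n)$), and the evident justification is the one you give --- every move on $\mathcal C_n$ leaves the single follower $\mathcal P_{n-1}$ with no vertex ever isolated, so $\mathcal{SG}(\mathcal C_n)=\mex{\{\mathcal{SG}(\mathcal P_{n-1})\}}$, which is $1$ when $\mathcal{SG}(\mathcal P_{n-1})=0$ and $0$ otherwise. Your handling of the ``in particular'' clause via Remark \ref{rem0or1} and Corollary \ref{coroddpathscycleswheels}(1) is also sound and consistent with the paper's earlier automorphism-based proof that even cycles are $\mathcal{P}$ positions.
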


Finally, we get a result about wheels that depends on whatever knowledge we had about paths.

\begin{definition}
A \emph{wheel graph} on $n$ vertices is defined by $\mathcal{W}_n=\mathcal{C}_{n-1}+K_1$. $K_1$ is called the center of the wheel.
\end{definition}

\begin{corollary}
Let $n\geq 4$, then $\mathcal{W}_n$ is an $\mathcal{N}$ position if and only if $\mathcal{P}_{n-2}$ is an $\mathcal{N}$ position. 
\end{corollary}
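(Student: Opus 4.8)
The plan is to compute the set of followers of $\mathcal{W}_n$ directly and then read off its Sprague--Grundy value via the mex. Since $\mathcal{W}_n=\mathcal{C}_{n-1}+K_1$ consists of a rim cycle on $n-1$ vertices together with a hub adjacent to every rim vertex, there are exactly two types of legal first move when $n\geq 4$. First, a player may delete the hub; because each rim vertex retains its two cycle-neighbors (here $n-1\geq 3$), no vertex is isolated and the follower is $\mathcal{C}_{n-1}$. Second, a player may delete a rim vertex; by the dihedral symmetry of the wheel all such moves are equivalent, and deleting one rim vertex breaks the cycle into a path on $n-2$ vertices while leaving the hub adjacent to all of them (the hub keeps $n-2\geq 2$ neighbors, so nothing is isolated). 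Thus this follower is the fan $\mathcal{P}_{n-2}+K_1$. Hence $\mathcal{F}(\mathcal{W}_n)=\{\mathcal{C}_{n-1},\ \mathcal{P}_{n-2}+K_1\}$, and $\mathcal{W}_n$ is an $\mathcal{N}$ position if and only if at least one of these two followers is a $\mathcal{P}$ position.

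Next I would relate each follower to a path. For the cycle follower, the preceding corollary gives $\mathcal{SG}(\mathcal{C}_{n-1})=1$ exactly when $\mathcal{SG}(\mathcal{P}_{n-2})=0$, and $\mathcal{SG}(\mathcal{C}_{n-1})=0$ otherwise; equivalently, $\mathcal{C}_{n-1}$ is a $\mathcal{P}$ position if and only if $\mathcal{P}_{n-2}$ is an $\mathcal{N}$ position. For the fan follower, the key observation is that deleting the hub of $\mathcal{P}_{n-2}+K_1$ leaves exactly the path $\mathcal{P}_{n-2}$ (no endpoint is isolated because $n-2\geq 2$), so $\mathcal{P}_{n-2}$ is itself a follower of the fan. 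Consequently, whenever $\mathcal{P}_{n-2}$ is a $\mathcal{P}$ position, the fan $\mathcal{P}_{n-2}+K_1$ is an $\mathcal{N}$ position, since the player to move can reach the $\mathcal{P}$ position $\mathcal{P}_{n-2}$ in a single step.

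Combining these two facts finishes both directions. If $\mathcal{P}_{n-2}$ is an $\mathcal{N}$ position, then $\mathcal{C}_{n-1}$ is a $\mathcal{P}$ position, so Player 1 wins $\mathcal{W}_n$ by deleting the hub, and $\mathcal{W}_n$ is an $\mathcal{N}$ position. Conversely, if $\mathcal{P}_{n-2}$ is a $\mathcal{P}$ position, then $\mathcal{C}_{n-1}$ is an $\mathcal{N}$ position by the cycle corollary and the fan $\mathcal{P}_{n-2}+K_1$ is an $\mathcal{N}$ position by the observation above; since both followers of $\mathcal{W}_n$ are then $\mathcal{N}$ positions, the mex of their Sprague--Grundy values is $0$ and $\mathcal{W}_n$ is a $\mathcal{P}$ position. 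Thus $\mathcal{W}_n$ is an $\mathcal{N}$ position if and only if $\mathcal{P}_{n-2}$ is an $\mathcal{N}$ position. I expect the only delicate step to be the clean enumeration of the followers --- specifically verifying that, for $n\geq 4$, neither deleting the hub nor deleting a rim vertex ever isolates a further vertex, and that all rim deletions coincide up to isomorphism --- since the remainder of the argument is just a mex computation over a two-element follower set together with an appeal to the cycle corollary.
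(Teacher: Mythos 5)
Your proposal is correct and follows essentially the same route as the paper: both identify the two followers of $\mathcal{W}_n$ (the cycle $\mathcal{C}_{n-1}$ from deleting the hub, and the fan $\mathcal{P}_{n-2}+K_1$ from deleting a rim vertex) and exploit the fact that each of these is exactly one move away from $\mathcal{P}_{n-2}$. The only cosmetic difference is that you phrase the argument in Sprague--Grundy/mex language and cite the preceding cycle corollary, whereas the paper argues the forced moves directly in strategy terms; the underlying idea is identical.
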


\begin{proof}
$\mathcal W_n$ has only two followers: $\mathcal{C}_{n-1}$, obtained from deleting the center of the wheel and  $\mathcal{W}'_{n-1}$, obtained by deleting a vertex different from the center of $\mathcal W_n$.

If Player 1 wins $\mathcal{P}_n$, then they can win $\mathcal{W}_{n+2}$ by removing the center vertex and leaving $\mathcal{C}_{n+1}$ as a follower, this then forces Player 2 to leave $\mathcal{P}_n$ as a follower, which is an $\mathcal{N}$ position.  

If Player 2 wins $\mathcal{P}_n$, then since $\mathcal{W}_{n+2}$ has only two followers and both of them are one deleted vertex away from $\mathcal{P}_n$, Player 2 can leave $\mathcal{P}_n$ as a follower after their first move. 
\end{proof}

%%%%%%%%%%%%%%%%%%%%%%%%%%%%%%%%%%%%%%%%%%%%%%%%%%%%%%%%%
\section{Grim on Random Graphs}\label{sec6}

We are interested in understanding what happens when Grim is played on a random graph. Our main reason is that evidence in previous sections has shown that the game seems to favor Player 1 overall. In order to do this, we will use Erd\H os-R\'enyi random graphs. 

\begin{definition} 
Let $n\in \N$ and $0 \leq p \leq 1$. An \emph{Erd\H os-R\'enyi random graph} $\mathcal{G}(n,p)$ is defined by having $n$ vertices and, for any pair of vertices, the existence of an edge connecting them has probability $p$.
\end{definition}

In this section, we will work on identifying probabilities $p$ for which Player 1 and Player 2 have equal chances of winning a random graph $\mathcal{G}(n,p)$.

\begin{remark} \label{lemmap=1}
The probability of \emph{any} given graph on $n$ vertices and $k$ edges to exist is given by $p^{k}(1-p)^{\binom{n}{2}-k}$. So, if $p=0.5$, then all possible graphs on $n$ vertices have equal probability to exist; this value being $(1/2)^{{n \choose2}}$. 
\end{remark}

\begin{definition} 
The \emph{winning probability} of Player 1, denoted $W_1(p)$, is a function that describes the probability for Player 1 to win on a random graph, for a fixed $n$ and a given edge probability $p$. It is of the form 
\[
W_1(p)=\sum_{\mathcal{G}} X(\mathcal{G})P(\mathcal{G}),
\]
where the sum runs through all graphs (including the empty graph, which is a $\mathcal{P}$ position), $P(\mathcal{G})=p^k(1-p)^{{n \choose 2}-k}$ for a graph on $k$ edges, and
\[
X(\mathcal{G}) =
\begin{cases}
1 & \text{if } \mathcal{SG}(\mathcal{G})\neq 0 \\
0 & \text{if } \mathcal{SG}(\mathcal{G}) = 0
\end{cases}
\]
\end{definition}

The \emph{winning probability} of Player 2, denoted $W_2(p)$, may be defined similarly.

As an illustration, let us consider Erd\H os-R\'enyi random graphs on $n=3$ vertices with edges generated with probability $p$. We know that there are eight $3$-vertex graphs. Out of these eight graphs, Player 1 would win all three graphs with one edge, all three graphs with 2 edges, and lose the one complete graph and the empty graph. Hence, the winning probabilities are:
\[
W_1(p)  =  3p(1-p)^2+3p^2(1-p)  \hspace{1.5in}  W_2(p)  =  (1-p)^3+p^3.
\]

Note that setting $W_1(p)  =  W_2(p)$ yields the quadratic equation $6p^2- 6p+1=0$, which has solutions
\[
p = \frac{3\pm \sqrt{3}}{6};
\]
they are approximately $0.21$ and $0.79$.

Thus, Player 1 and Player 2 have an equal chance of winning on a 3-vertex graph if the chance of each edge appearing is about $21\%$ or $79\%$. Moreover, looking at the graph of $W_1(p)  - W_2(p)$ we get that Player 2 has more than a $50\%$ chance of winning as long as 
\[
p< \frac{3- \sqrt{3}}{6} \hspace{.5in}  \text{or}  \hspace{.5in}  \frac{3 + \sqrt{3}}{6} < p.
\]

On the other hand, if we now consider Erd\H os-R\'enyi random graphs on $n=4$ vertices. We get  that the winning probability for Player 2 is 
\[
W_2(p)=3p^2(1-p)^4+16p^3(1-p)^3 + (1-p)^5,
\]
which is equal to $0.5$ for $p\sim 0.16$, and always below $0.5$ after.

Our examples suggest the following theorem.

\begin{theorem}\label{thmp_00.5}
Let $n$ be odd. For an Erd\H os-R\'enyi random graph $\mathcal{G}(n,p)$, there exists $0<p_0<1$ such that $W_2(p)\geq0.5$, for all $p\geq p_0$.
\end{theorem}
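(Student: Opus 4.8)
The plan is to exploit the fact that, as $p \to 1$, the random graph $\mathcal{G}(n,p)$ concentrates its probability mass on the complete graph $K_n$, together with the fact that $K_n$ is a $\mathcal{P}$ position precisely when $n$ is odd (Lemma \ref{thmcomplete}). This is exactly the regime where the oddness hypothesis does its work: for even $n$ the complete graph would be an $\mathcal{N}$ position, and the statement would in fact be false.

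First I would record that, just as $W_1$ sums $P(\mathcal{G})$ over $\mathcal{N}$ positions, the winning probability of Player 2 is
\[
W_2(p) = \sum_{\mathcal{G} :\, \mathcal{SG}(\mathcal{G}) = 0} P(\mathcal{G}),
\]
the sum running over all $\mathcal{P}$ positions (using Remark \ref{rem0or1}); in particular $W_1(p) + W_2(p) = 1$. Since $n$ is odd, Lemma \ref{thmcomplete} guarantees that $K_n$ is a $\mathcal{P}$ position, so the term $P(K_n)$ appears in the sum defining $W_2(p)$. As $K_n$ has all $\binom{n}{2}$ edges present, $P(K_n) = p^{\binom{n}{2}}$, and because every remaining summand is nonnegative we obtain the clean lower bound
\[
W_2(p) \geq p^{\binom{n}{2}}.
\]

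Next I would choose the threshold explicitly. For $n \geq 3$ we have $\binom{n}{2} \geq 1$, so $p_0 := (1/2)^{1/\binom{n}{2}}$ lies strictly between $0$ and $1$. Since $p \mapsto p^{\binom{n}{2}}$ is increasing on $[0,1]$, any $p \geq p_0$ satisfies $p^{\binom{n}{2}} \geq p_0^{\binom{n}{2}} = 1/2$, and hence $W_2(p) \geq 1/2$, as desired. The case $n = 1$ is degenerate: the single vertex is isolated and no game is played, so $W_2(p) = 1$ for every $p$ and any $p_0$ works.

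Honestly, there is no serious obstacle here; once one notices that $K_n$ is a $\mathcal{P}$ position for odd $n$, the domination bound $W_2(p) \geq p^{\binom{n}{2}}$ is immediate, and the monotonicity of $p^{\binom{n}{2}}$ finishes the argument. The only points requiring a word of care are confirming $0 < p_0 < 1$ (which forces the trivial separate treatment of $n = 1$) and noting that the argument genuinely uses the parity of $n$ through Lemma \ref{thmcomplete}. If one prefers not to produce an explicit $p_0$, an alternative is to observe that $W_2$ is a polynomial in $p$ with $W_2(1) = 1 > 1/2$, so by continuity there is a neighborhood $(p_0, 1]$ on which $W_2(p) > 1/2$; I would nonetheless favor the explicit bound, since it yields a closed form for a working $p_0$.
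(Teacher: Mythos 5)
Your proposal is correct and follows essentially the same route as the paper: lower-bound $W_2(p)$ by the probability $p^{\binom{n}{2}}$ of the complete graph (a $\mathcal{P}$ position since $n$ is odd), then take $p_0 = (1/2)^{1/\binom{n}{2}}$, which is exactly the paper's $(1/4)^{1/(n^2-n)}$, and conclude by monotonicity. If anything, you are slightly more careful than the paper, since you make explicit the appeal to Lemma \ref{thmcomplete} behind the bound and you treat the degenerate case $n=1$, where the paper's exponent $1/(n^2-n)$ is undefined.
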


\begin{proof} 
We will get a lower bound for $W_2(p)$ and use it to derive the existence of $p_0$. Note that, by  Remark \ref{lemmap=1}, we get
\[
W_2(p)\geq p^{{n \choose2}}.
\]

We set this bound equal to 0.5 and solve for $p$. We get the solution
\[
p_0  =  \left(\frac{1}{4}\right)^{\frac{1}{n^2-n}},
\]
which is in $(0,1)$, for every $n$.

Since $p_0 < p$ implies $0.5 = p_0^{{n \choose2}}< p^{{n \choose2}}$, we get that $W_2(p)\geq0.5$, for all $p\geq p_0$.
\end{proof}

Notice that as the number of vertices gets large, the bound for the value of $p_0$ given in the proof of Theorem \ref{thmp_00.5} approaches $1$ and thus, overall, Player 1 has increasingly better chances to win. It would be good to find an explicit expression for the least possible $p_0$, but this seems to be a really hard problem, as we do not know which graphs are $\mathcal{P}$ positions.

%%%%%%%%%%%%%%%%%%%%%%%%%%%%%%%%%%%%%%%%%%%%%%%%%%%%%%%%%
\section{Acknowledgments}

The authors would like to thank the referee for his/her thorough review, and for his/her comments and suggestions; they have definitely improve the flow in the exposition, the presentation, and the overall quality of this article.

%%%%%%%%%%%%%%%%%%%%%%%%%%%%%%%%%%%%%%%%%%%%%%%%%%%%%%%%%
\section{Conflict of Interests}

The authors declare that there is no conflict of interest regarding the publication of this article.

%%%%%%%%%%%%%%%%%%%%%%%%%%%%%%%%%%%%%%%%%%%%%%%%%%%%%%%%%

\end{document}